\theoremstyle{plain}
\newtheorem{thm}{Theorem}
\newtheorem{cor}[thm]{Corollary}
\newtheorem{lem}[thm]{Lemma}
\newtheorem{defn}{Definition}
\newtheorem{rem}{Remark}
\newtheorem{prop}{Proposition}
\newtheorem{ex}{Example}
\newcommand{\ra}{\rightarrow}
\newcommand{\C}{\mathsf{C}}
\newcommand{\we}{\wedge}
\newcommand{\ConA}{\mathrm{Con}(A)}
\newcommand{\SU} {\mathrm{Sub_C}(A)}
\newcommand{\SUs} {\mathrm{Sub_{SC}}(A)}
\newcommand{\CRL}{\mathsf{CRL}}
\newcommand{\SRL}{\mathsf{SRL}}
\newcommand{\gSRL}{\mathsf{SR}}
\newcommand{\gSRLc}{\mathsf{SR}^{\mathrm{c}}}
\newcommand{\igSRL}{\mathsf{iSR}}
\newcommand{\Co}{\mathrm{C}_1}
\newcommand{\Ct}{\mathrm{C}_2}
\newcommand{\Eo}{\mathrm{E_1}}
\newcommand{\Et}{\mathrm{E_2}}
\title{Subresiduated lattice ordered commutative monoids}
\date{}
\author{Cornejo J.M., San Mart\'{\i}n H.J. and S\'{\i}gal V.}
\begin{document}

\maketitle

\begin{abstract}
A subresiduated lattice ordered commutative monoid
(or srl-monoid for short) is a pair
$(\textbf{A},Q)$ where $\textbf{A}=(A,\we,\vee,\cdot,e)$ is an
algebra of type $(2,2,2,0)$ such that $(A,\we,\vee)$ is a lattice,
$(A,\cdot,e)$ is a commutative monoid, $(a\vee b)\cdot c = (a\cdot
c) \vee (b\cdot c)$ for every $a,b,c\in A$ and $Q$ is a subalgebra
of \textbf{A} such that for each $a,b\in A$ there exists $c\in Q$
with the property that for all $q\in Q$, $a\cdot q \leq b$ if and
only if $q\leq c$. This $c$ is denoted by $a\ra_Q b$, or simply by
$a\ra b$.

The srl-monoids $(\textbf{A},Q)$ can
be regarded as algebras $(A,\we,\vee,\cdot,\ra,e)$ of type $(2,2,2,2,0)$. These algebras
are a generalization of subresiduated lattices and commutative
residuated lattices respectively.

In this paper we prove that the class of srl-monoids forms
a variety. We show that the lattice of congruences of any
srl-monoid is isomorphic to the lattice of its strongly convex subalgebras
and we also give a description of the strongly convex subalgebra generated by
a subset of the negative cone of any srl-monoid. We apply both results in order
to study the lattice of congruences of any srl-monoid by giving
as application alternative equational basis
for the variety of srl-monoids generated by its totally ordered members.
\end{abstract}

\section{Introduction}

A \emph{lattice ordered monoid} (or l-monoid for short) is an algebra $(A,\we,\vee,\cdot,e)$
of type $(2,2,2,0)$ such that $(A,\we,\vee)$ is a lattice, $(A,\cdot,e)$ is a
monoid and the equalities $(a\vee b)\cdot c = (a\cdot c) \vee (b\cdot c)$
and $c \cdot (a\vee b) = (c\cdot a) \vee (c\cdot b)$ are satisfied for every $a,b,c\in A$
\footnote{In \cite{J} the terminology \emph{lattice ordered monoid} is used in a different
sense to that employed in the present paper. More precisely, in \cite{J}
a \emph{lattice ordered monoid} is defined as an algebra $(A,\we,\vee,\cdot,e)$ of type $(2,2,2,0)$
such that $(A,\we,\vee)$ is a lattice, $(A,\cdot,e)$ is a monoid and the equalities
$(a\vee b)\cdot c = (a\cdot c) \vee (b\cdot c)$, $c \cdot (a\vee b) = (c\cdot a) \vee (c\cdot b)$,
$(a\we b)\cdot c = (a\cdot c) \we (b\cdot c)$ and $c \cdot (a\we b) = (c\cdot a) \we (c\cdot b)$
are satisfied for every $a,b,c\in A$.}.
Note that in particular $\cdot$ is monotone, i.e., for every
$a,b,c\in A$, if $a\leq b$ then $a\cdot c \leq b\cdot c$ and $c \cdot a \leq c \cdot b$.
A \emph{lattice ordered commutative monoid} (or commutative l-monoid for short) is an l-monoid
$(A,\we,\vee,\cdot,e)$ where the binary operation $\cdot$ is commutative
\footnote{The definition of \emph{lattice ordered commutative monoid} used in the present paper
is exactly the definition of \emph{lattice-ordered monoid} given in \cite{W}.}.
\vspace{1pt}

Many varieties of interest for algebraic logic
are commutative l-monoids or have reduct in them.
Some relevant examples of algebras with reduct in
commutative l-monoids are commutative residuated lattices
and algebras with reduct in distributive lattices with a greatest element.
\vspace{1pt}

\begin{defn}\label{cd}
A subresiduated lattice ordered commutative monoid (or srl-monoid
for short) is a pair $(\textbf{A},Q)$ where $\textbf{A}=(A,\we,\vee,\cdot,e)$
is a commutative l-monoid and $Q$ is a subalgebra
of \textbf{A} such that for each $a,b\in A$ there exists $c\in Q$
with the property that for all $q\in Q$, $a\cdot q \leq b$ if and
only if $q\leq c$. This $c$ is denoted by $a\ra_Q b$, or simply by
$a\ra b$.
\end{defn}

Let $(\textbf{A},Q)$ be an srl-monoid. If there is not ambiguity
we write $(A,Q)$ in place of $(\textbf{A},Q)$.
The srl-monoids can be regarded as algebras
$(A,\we,\vee,\cdot,\ra,e)$ of type $(2,2,2,2,0)$.
Moreover, if $(A,Q)$ is an srl-monoid then $Q = \{a\in A:e\ra a = a\}$.
In this sense we say that the binary operation $\ra$ determines the set $Q$.

Note that if $(A,\we,\vee,\cdot,\ra,e)$ is an srl-monoid then for every $a,b\in A$ we
have that $a\ra b = \;\text{max}\;\{q\in Q: a\cdot q \leq b\}$. Conversely, if
$(A,\we,\vee,\cdot,e)$ is a commutative l-monoid
and $Q$ is a subalgebra of $(A,\we,\vee,\cdot,e)$ such that for every $a,b\in A$
there exists the maximum of the set $\{q\in Q: a\cdot q \leq b\}$ (which will be denoted by $a\ra b$)
then $(A,\we,\vee,\cdot,\ra,e)$ is an srl-monoid.

\begin{defn}
An srl-monoid $(A,\we,\vee,\cdot,\ra,e)$ is called integral
if it has greatest element, which is denoted by $1$, and $e = 1$.
\end{defn}

In \cite{EG} a logic for left continuous t-norms is introduced as a
general framework for fuzzy logic. Their corresponding algebraic structures are MTL-algebras,
which can be also presented as the variety of prelinear commutative integral bounded residuated
lattices.
This variety is generated as such by the class of its totally ordered members.
In this sense, the variety of prelinear integral (and bounded) srl-monoids, which is an
appropriate expansion (with bottom) of a subvariety of that of prelinear srl-monoids
studied in Section \ref{s4}, can be seen as a generalization of that of MTL-algebras.
Hence, its assertional logic generalizes monoidal t-norm based logic.
\vspace{1pt}

The definition of srl-monoid is motivated by the definitions of
subresiduated lattice (or sr-lattice for short) \cite{EH}
and commutative residuated lattice \cite{HRT} respectively.
We recall both definitions in what follows.
\vspace{1pt}

A \emph{subresiduated lattice} (or sr-lattice for short) is a pair $(A,Q)$, where $A$ is a bounded
distributive lattice, $Q$ is a bounded sublattice of $A$ and for
every $a,b\in A$ there is $c\in Q$ such that for all $q\in Q$,
$q\we a\leq b$ if and only if $q\leq c$. This $c$ is denoted by
$a\ra_Q b$, or simply by $a\ra b$.
Let $(A,Q)$ be a sr-lattice.
The pair $(A,Q)$ can be regarded as an algebra $(A,\we,\vee,\ra,0,1)$
of type $(2,2,2,0,0)$. Moreover, $Q=\{a\in A:1\ra a=a\}$.
The class of subresiduated lattices forms a variety \cite{EH} which properly
contains to the variety of Heyting algebras \cite{CNSM,SM0}.
If $(A,\we,\vee,\ra,0,1)$ is a subresiduated lattice and $a,b,c\in A$, then
the following condition is satisfied:
\[
\text{if}\; a\leq b\ra c\;\text{then}\;a\we b \leq c.
\]
However, the converse of the above mentioned property is not true in general.
\vspace{1pt}

An algebra $(A,\we,\vee,\cdot,\ra,e)$ of type $(2,2,2,2,0)$ is said to be
a \emph{commutative residuated lattice} if $(A,\cdot,e)$ is a commutative monoid,
$(A,\we,\vee)$ is a lattice and for every $a,b,c\in A$,
\[
a\cdot b\leq c\;\text{if and only if}\: a\leq b\ra c.
\]
The class of commutative residuated lattices forms a variety \cite{HRT}.
A commutative residuated lattice is called \emph{integral} if $a\leq e$
for every $a$.

\begin{rem}
If $(A,Q)$ is an srl-monoid then the algebra $(Q,\we,\vee, \cdot,\ra,e)$
is a commutative residuated lattice.
\end{rem}

We write $\gSRL$ for the class of srl-monoids, $\igSRL$ for the
class of integral srl-monoids, $\SRL$ for the variety of sr-lattices
and $\CRL$ for the variety of commutative residuated lattices respectively.
\vspace{3pt}

The following elemental properties establish the connection between sr-lattices, commutative
residuated lattices and srl-monoids:
\begin{itemize}
\item If $(A,\we,\vee,\ra,0,1)\in \SRL$ then $(A,\we,\vee,\we,\ra,1) \in \gSRL$
(in this sense we say that every sr-lattice is an srl-monoid). Moreover,
$(A,\we,\vee, \ra,0,1) \in \SRL$  if and only if $(A,\we,\vee,\we,\ra,1)\in \igSRL$
and this algebra has a least element.
\item If $(A,\we,\vee,\cdot,\ra,e)\in \CRL$ then $(A,\we,\vee,\cdot,\ra,e)\in \gSRL$
(it can be proved by defining $Q = A$). Moreover,
$(A,\we,\vee,\cdot,\ra,e)\in \CRL$ if and only if
$(A,\we,\vee,\cdot,\ra,e)\in \gSRL$ and $e\ra a = a$ for every $a\in A$.
\end{itemize}

In every srl-monoid $A$ the following condition is satisfied for every $a,b,c\in A$:
\[
\text{if}\; a\leq b\ra c\;\text{then}\; a\cdot b\leq c.
\]
The converse of the previous property is verified if and only if
$A\in \CRL$.
\vspace{1pt}

In what follows we will make some remarks about the logical counterpart of
the varieties $\SRL$ and $\CRL$ respectively.
\vspace{1pt}

S4-algebras are defined as Boolean algebras with a modal operator
$\square$ in the language that satisfies the identities $\square(1)=1$,
$\square(a\we b) = \square(a)\we \square(b)$, $\square(a) \leq a$ and
$\square(a)\leq \square(\square(a))$. It is known that the variety $\mathsf{S4}$,
whose members are the S4-algebras, is the algebraic semantics of the modal logic
$\mathbf{S4}$. This means that $\phi$ is a theorem of $\mathbf{S4}$ if and only
if the variety $\mathsf{S4}$ satisfies $\phi\approx 1$.
The variety $\SRL$ corresponds to the variety of algebras
defined for all the equations $\phi \approx 1$ satisfied
in the variety $\mathsf{S4}$ where only appears the connectives conjunction
$\we$, disjunction $\vee$, bottom $\perp$, top $\top$ and a new connective
of implication (called strict implication) defined by
$\varphi \Rightarrow \psi:= \square(\varphi\rightarrow\psi)$, where
$\ra$ denotes the classical implication \cite{CJ,EH}.
\vspace{1pt}

We write $\mathbf{HFL_e}^+$ for the system obtained from $\mathbf{HFL_e}$
removing the constant $0$ from its language, where $\mathbf{HFL_e}$ is defined as
the Hilbert system mentioned in \cite[Figure 2.9]{GJKO}. The variety $\CRL$
is the equivalent algebraic semantics for the consequence relation
$\vdash_{\mathbf{HFL_e}^+}$, where we recall that the translations
$\rho$ and $\tau$ from equations to formulas and from formulas to
equations respectively are defined by
$s \approx t\; \mapsto^{\rho}\; (s\ra t)\we (t\ra s)$ and
$\varphi\; \mapsto^{\tau}\; e \approx e\we \varphi$,
where $e$ denotes the constant in the signature of $\CRL$.
\vspace{1pt}

In this paper we focus our attention in the study of the classes $\gSRL$ and $\igSRL$,
which are varieties (see Theorem \ref{t1}). However, it would be also interesting to study the
features of the logic semantically defined by these varieties.
For the case of the variety $\igSRL$, there is a standard way to obtain a logic
which has this variety as its algebraic semantics.
We define $\mathsf{L}$ as the implicative logic, where $\ra$ denotes the implication, in the language $\{\we, \vee, \cdot,\ra\}$
with the additional axioms given by the translation of the set of equations which characterize the variety $\igSRL$
(obtained from Theorem \ref{t1} and the additional equation $x\we 1 \approx 1$), where the translation from equations to axioms is defined by
$s \approx t\; \mapsto\; (s\ra t)\we (t\ra s)$. Since $\mathsf{L}$ is an implicative logic then it
follows from \cite[Theorem 2.9]{Font} that $\mathsf{L}$ is complete with respect to the class of algebras
$\mathrm{Alg}^{*}(\mathsf{L})$ defined in \cite[Definition 2.5]{Font}, where the element $1$ is included in
the language of the algebras. Moreover, it follows from \cite[Proposition 2.7]{Font} that
$\mathrm{Alg}^{*}(\mathsf{L})$ coincides with the class of algebras in
the signature of integral srl-monoids given by the equations and quasi-equations that result
by applying the transformation $\varphi\; \mapsto\; \varphi \approx 1$ to the axioms and rules of $\mathsf{L}$.
It is immediate that $\mathrm{Alg}^{*}(\mathsf{L}) = \igSRL$, so $\mathsf{L}$ is complete with respect to $\igSRL$.
Furthermore, if we consider the logic $\mathsf{L}$ with the
additional axiom
\[
((\top \ra \alpha) \ra \alpha) \we (\alpha \ra (\top \ra \alpha))
\]
(where $\top$ is defined as $x\ra x$ by a fixed variable $x$), then we get that this logic is
complete with respect to the variety of integral commutative residuated lattices.
A more detailed study of the logical counterpart of the variety $\igSRL$, as well as a study of the
of the logical counterpart of the variety $\gSRL$, will be considered in a future paper.
In particular, we will give another presentation of a logic which is complete with respect to $\igSRL$.
\vspace{1pt}

There are other connections of integral srl-monoids with existing literature.
In \cite{C} Celani introduced distributive lattices with fusion
and implication, which are defined as algebras $(A,\we,\vee, \cdot,\ra,0,1)$ of type
$(2,2,2,2,0,0)$ such that $(A,\we,\vee,0,1)$ is a bounded distributive lattice and for every
$a,b,c\in A$ the following conditions are satisfied:
\begin{enumerate}
\item[(F1)] $a\cdot (b\vee c) = (a\cdot b) \vee (a\cdot c)$,
\item[(F2)] $(b\vee c)\cdot a = (b \cdot a) \vee (c\cdot a)$,
\item[(F3)] $0\cdot a = a\cdot 0 = 0$,
\item[(I1)] $(a\ra b)\we(a\ra c)=a\ra(b\we c)$,
\item[(I2)] $(a\ra c)\we(b\ra c)=(a\vee b)\ra c$,
\item[(I3)] $0\ra a = a \ra 1 = 1$.
\end{enumerate}
The variety of algebras with fusion and implication appears in a natural way in all algebraic
structures associated with many-valued logic and relevance logic.
A direct computation based in properties
of srl-monoids (see Section \ref{s1}) shows that if
$(A,\we,\vee,0,1)$ is a bounded distributive lattice and $(A,\we,\vee,\cdot,\ra,1) \in \igSRL$ then
$(A,\we,\vee,\cdot,\ra,0,1)$ is a distributive lattice with fusion and implication.
\vspace{1pt}

The aim of the present paper is to generalize some results on
subresiduated lattices and commutative residuated lattices respectively
in the framework of srl-monoids. Inspired by some results from \cite{CJ,EH}
we prove that the class of srl-monoids forms a variety and we present two
equational basis for it. Besides,
motivated by the results given in \cite{HRT}, we also study the lattice of congruences
of any srl-monoid and as an application we present alternative equational basis
for the variety of srl-monoids generated by its totally ordered members.
\vspace{1pt}

The paper is organized as follows. In Section \ref{s1} we give some basic
properties and examples of srl-monoids. We also prove that the class
of srl-monoids is a variety. In Section \ref{s2} we introduce the definition
of strongly convex subalgebra and we prove that
for $A \in \gSRL$ there exists an order isomorphism between
the lattice of congruences of $A$ and the lattice of strongly convex
subalgebras of $A$. This result is a natural generalization of
\cite[Theorem 2.3]{HRT}, which establishes an order isomorphism between the
lattice of congruences and the lattice of convex subalgebras of each
commutative residuated lattice. In Section \ref{s3} we give a description of
the strongly convex subalgebra generated by a subset of the negative cone
of any srl-monoid and we apply this result in order to give some properties
of the lattice of strongly convex subalgebras of each srl-monoid.
We finish this section by giving a characterization of the principal
congruences of any srl-monoid. Finally, in Section \ref{s4} we study the
variety generated by the class of srl-monoids whose order is total.
The main goal of this section is to give alternative equational basis for the
mentioned variety.

\section{Basic results} \label{s1}

In this section we give some preliminary properties of srl-monoids. Our
main goal is to show that the class $\gSRL$ is a variety.
\vspace{1pt}

We start by given some examples of srl-monoids.

\begin{ex} \label{ex1}
Let $A$ be a commutative l-monoid where the underlying lattice of $A$ is bounded
(the first element is denoted by $0$ and the greatest element is denoted by $1$).
Also assume that $e = 1$. Consider $Q$ as a subset of the universe of $A$ which
is a finite chain closed under $\cdot, 0$ and $1$.
Then for every $a$ and $b$ elements in the universe of $A$ we get $0\in \{q\in Q: a\cdot q \leq b\}$,
so this is a non empty set.
Thus, there exists the maximum of the set $\{q\in Q: a\cdot q \leq b\}$.
Therefore, $(A,Q)$ is an integral srl-monoid.
\end{ex}

\begin{ex} \label{ex1}
Consider the chain of three elements $A = \{0,e,1\}$ with $0<e<1$. We define
the binary operation $\cdot$ by

\[%
\begin{tabular}
[c]{c|ccc}%
$\cdot$ & $0$ &         $e$&             $1$ \\\hline
$0$     & $0$        &  $0$            & $0$ \\
$e$     & $0$        &  $e$            & $1$ \\
$1$     & $0$        &  $1$            & $1$ %
\end{tabular}
\hskip15pt
\]

Let $Q = \{0,e\}$. A direct computation shows that $(A,Q)$ is an srl-monoid.
Moreover, the operation $\ra$ is given by

\[%
\begin{tabular}
[c]{c|ccc}%
$\ra$ & $0$ &         $e$&             $1$ \\\hline
$0$   & $e$        &  $e$            & $e$ \\
$e$   & $0$        &  $e$            & $e$ \\
$1$   & $0$        &  $0$            & $e$ %
\end{tabular}
\]

Note that since $e\neq 1$, $(A,\we,\vee,\cdot,\ra,e) \notin \igSRL$.
Moreover, this algebra is not a sr-lattice because $e\cdot 1 \neq e\we 1$ and this is not
a commutative residuated lattice because $e\ra 1 \neq 1$.
\end{ex}

\begin{ex} \label{ex2}
Consider the chain of three elements $A = \{0,a,1\}$ with $0<a<1$. We define
binary operation $\cdot$ by

\[%
\begin{tabular}
[c]{c|ccc}%
$\cdot$ & $0$ &         $a$&             $1$ \\\hline
$0$     & $0$        &  $0$            & $0$ \\
$a$     & $0$        &  $0$            & $a$ \\
$1$     & $0$        &  $a$            & $1$ %
\end{tabular}
\hskip15pt
\]

Let $Q = \{0,1\}$. We have that $(A,Q)$ is an integral srl-monoid.
Moreover, the operation $\ra$ is given by

\[%
\begin{tabular}
[c]{c|ccc}%
$\ra$ & $0$ &         $a$&             $1$ \\\hline
$0$   & $1$        &  $1$            & $1$ \\
$a$   & $0$        &  $1$            & $1$ \\
$1$   & $0$        &  $0$            & $1$ %
\end{tabular}
\]

Note that the algebra $(A,\we,\vee,\cdot,\ra,1)$ is not a sr-lattice because $a\cdot a \neq a\we a$ and this is not
an integral commutative residuated lattice because $1\ra a \neq a$.
\end{ex}

\begin{ex} \label{ex3}
Let $A$ be the real interval $[0,1]$.
This set can be seen as an MV-algebra \cite{COM} and in consequence also as a commutative
residuated lattice. In particular,
$(a\vee b) \cdot c = (a\cdot c) \vee (b\cdot c)$, where we write $\cdot$ for the usual product in the MV-algebra
$[0,1]$.
Note that for each natural number $n$ with $n\geq 2$ we have that
$\L _n = \{0, \frac{1}{n-1},\frac{2}{n-1},\dots,\frac{n-2}{n-1},1\}$
is a subalgebra of the commutative residuated lattice $A$.
In what follows we also write $A$ for the $\{\we,\vee,\cdot,1\}$-reduct of the
mentioned commutative residuated lattice. We have that $(A,\L_n)$ is an integral srl-monoid.
\end{ex}

Now we give an equational characterization for the class of srl-monoids.

\begin{thm} \label{t1}
Let $(A,\we,\vee,\cdot,\ra,e)$ be an algebra of type
$(2,2,2,2,0)$.

Then $(A,\we,\vee,\cdot,\ra,e)$ is an srl-monoid if
and only if $(A,\we,\vee,\cdot,e)$ is a commutative l-monoid
and the following identities are satisfied:
\begin{enumerate}[\normalfont 1)]
\item $e\leq (x\wedge y)\ra y$,
\item $x\ra y \leq (z\wedge e)\ra (x\ra y)$,
\item $x\cdot (x\ra y) \leq y$,
\item $z\ra (x\wedge y) = (z\ra x)\wedge (z\ra y)$,
\item $e\ra ((e\ra x)\cdot(e\ra y)) = (e\ra x)\cdot (e\ra y)$,
\item $e\ra y \leq x \ra (x\cdot (e\ra y))$.
\end{enumerate}
\end{thm}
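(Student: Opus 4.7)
The plan is to pin down the subalgebra $Q$ via the identification $Q = \{a \in A : e\ra a = a\}$ noted earlier in the paper, and then to push everything through the adjunction $a\cdot q \leq b$ iff $q \leq a\ra b$, which is valid for $q\in Q$. For the forward direction, suppose $(\textbf{A},Q)$ is an srl-monoid; each identity is a short instance of that adjunction together with closure of $Q$ under the l-monoid operations. Specifically, identities 1 and 3 are the adjunction at $q = e \in Q$ and $q = x\ra y \in Q$ respectively; identity 2 combines $z\we e \leq e$ with monotonicity of $\cdot$ and $q = x\ra y$; identity 4 expresses that both sides are the maximum element of $Q$ dominated simultaneously by $z\ra x$ and $z\ra y$; identity 5 is immediate from closure of $Q$ under $\cdot$ together with $e\ra q = q$ for every $q \in Q$; and identity 6 is the tautology $a\cdot(e\ra y) \leq a\cdot(e\ra y)$ translated under the adjunction with $q = e\ra y$.

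For the converse, assume the six identities hold and define $Q = \{a \in A : e\ra a = a\}$. Several preliminary facts come almost for free: $a\ra b \in Q$ for all $a,b$ (identity 3 gives $e\ra(a\ra b)\leq a\ra b$ and identity 2 with $z = e$ gives the reverse); $e \in Q$ (identities 1 and 3 at $x=y=e$); the map $b \mapsto a\ra b$ is monotone (from identity 4, since $x\leq y$ forces $z\ra x = (z\ra x)\we (z\ra y)$); and $\cdot$ is monotone (from distributivity over $\vee$). The crux is then to derive the adjunction $a\cdot q \leq b$ iff $q \leq a\ra b$ for $q \in Q$. The direction from right to left uses identity 3 together with monotonicity of $\cdot$. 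For the direction from left to right, apply identity 6 at $y = q$; since $q = e\ra q$, this reads $q \leq a\ra(a\cdot q)$, and monotonicity of $b\mapsto a\ra b$ then yields $q \leq a\ra b$.

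With the adjunction in hand, the remaining closure properties of $Q$ follow: $\we$-closure is identity 4 at $z = e$, $\cdot$-closure is identity 5, and $\vee$-closure uses monotonicity of the map $b\mapsto e\ra b$ (for $a,b\in Q$ one has $a = e\ra a \leq e\ra(a\vee b)$ and similarly for $b$, hence $a\vee b \leq e\ra(a\vee b)$, while identity 3 gives the reverse). This completes the verification that $(A,Q)$ is an srl-monoid whose derived residual coincides with $\ra$. The main conceptual obstacle is recognising that identity 6 encodes exactly the hard half of residuation restricted to $Q$: once one substitutes $e\ra q$ for $q$, identity 6 becomes $q \leq a\ra(a\cdot q)$, and the rest is monotonicity bookkeeping. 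A minor additional subtlety is $\vee$-closure of $Q$, which does not correspond to any single identity but must be extracted from the monotonicity derived above.
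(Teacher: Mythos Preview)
Your proof is correct and follows essentially the same approach as the paper: both directions hinge on defining $Q=\{a:e\ra a=a\}$, extracting monotonicity of $b\mapsto a\ra b$ from identity~4, and reading identity~6 (after substituting $q=e\ra q$) as the nontrivial half $q\leq a\ra(a\cdot q)$ of the restricted adjunction. The only difference is cosmetic ordering---the paper verifies all closure properties of $Q$ before the adjunction, whereas you establish the adjunction first and finish closure afterward---but the ingredients and their uses are identical.
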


\begin{proof}
Suppose that $(A,\we,\vee,\cdot,\ra,e)$ is an srl-monoid.
It is immediate that the conditions 5) and 6) are verified.

Since $e\in Q$ and $e\cdot (a\wedge b) = a\wedge b \leq b$
then $e\leq (a\wedge b)\ra b$, which is 1). Taking into
account the definition of $a\ra b$ we
obtain $a\cdot (a\ra b) \leq b$, which is 3). Besides, since
$(c\wedge e)\cdot (a\ra b) \leq e\cdot (a\ra b) = a\ra b$ then $a\ra b \leq
(c\wedge e)\ra (a\ra b)$, which is 2). Now we will see that
\[
c \ra (a\we b) = (c\ra a) \we (c\ra b).
\]
By 3), $c\cdot (c\ra (a\we b)) \leq a\we b \leq
a$, so $c\ra (a\we b)\leq c\ra a$. In a similar way we have that
$c\ra (a\we b)\leq c\ra b$. Thus, $c\ra (a\we b)$ is a lower
bound of $\{c\ra a, c\ra b\}$ in $Q$. Let $d$ be a lower bound of
$\{c\ra a,c\ra b\}$ in $Q$. Then $d\in Q$, $d\leq c\ra a$ and
$d\leq c\ra b$, so $d\cdot c \leq a$ and $d\cdot c\leq b$. Hence,
$d\cdot c \leq a\we b$, which implies the inequality $d\leq c\ra (a\we b)$.
Hence, we have proved 4).

Conversely, suppose that $(A,\we,\vee,\cdot,e)$ is a
commutative l-monoid and that the identities 1),\dots, 6) are satisfied in $A$.
Let $Q = \{a\in A:e\ra a = a\}$ and $a,b,c \in A$. It follows
from 3) that
\begin{enumerate}
\item[I)] $e\ra a \leq a$.
\end{enumerate}
It follows from 4) that
\begin{enumerate}
\item[II)] if $a\leq b$ then $c\ra a\leq c\ra b$.
\end{enumerate}
We also have that
\begin{enumerate}
\item[III)] if $a\leq b$ then $a\cdot c\leq b\cdot c$.
\end{enumerate}

Suppose that $q,r\in Q$. It follows from I) and II) that
$q  =  e\ra q \leq  e\ra (q\vee r) \leq q\vee r$.
By the same reason, $r \leq e\ra (q\vee r) \leq q\vee r$.
Thus, $q\vee r = e\ra (q\vee r)$, i.e., $q\vee r \in Q$. By 4)
we also have that $q\we r \in Q$. By 1) and I) we have that $e\ra
e = e$, so $e\in Q$. By 5) we have that $e\ra (q\cdot r) = q\cdot
r$, so $q\cdot r \in Q$. Hence, $Q$ is a subalgebra of
$(A,\we,\vee,\cdot,e)$.

Let $a,b \in A$. By I) and 2) we have that
$e\ra (a\ra b)  \leq  a\ra b \leq  e\ra (a\ra b)$,
so $e\ra (a\ra b) = a\ra b$. Hence, $a\ra b \in Q$.

Finally, let $a,b\in A$ and $q\in Q$.
Assume that $a\cdot q\leq b$. It follows from 6) and II)
that
$q  =  e \ra q  \leq   a\ra (a\cdot q) \leq   a\ra b$,
so $q\leq a\ra b$. Now assume that $q\leq a\ra b$. Hence,
by III) and 3),
$a\cdot q   \leq   a\cdot (a\ra b) \leq   b$.
Thus, $a\cdot q\leq b$.
Therefore, $(A,\we,\vee,\cdot,\ra,e)$ is an srl-monoid.
\end{proof}

\begin{cor}\label{corvariety}
The class $\gSRL$ is a variety.
\end{cor}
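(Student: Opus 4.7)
The plan is to read Corollary \ref{corvariety} as a direct consequence of Theorem \ref{t1}, so the task reduces to observing that the axiomatization given there is genuinely \emph{equational}. First I would recall that the class of commutative l-monoids is already a variety, since it is defined by the monoid axioms (associativity, commutativity, unit law for $e$), the lattice axioms (commutativity, associativity, absorption of $\we$ and $\vee$) and the single distributive identity $(x\vee y)\cdot z \approx (x\cdot z)\vee (y\cdot z)$. These are all equations in the signature $(\we,\vee,\cdot,e)$, and they remain equations in the expanded signature $(\we,\vee,\cdot,\ra,e)$.

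Next I would treat the six additional conditions from Theorem \ref{t1}. Conditions 4) and 5) are already written as equations. Conditions 1), 2), 3) and 6) are written as inequalities $s\leq t$, but any inequality in a lattice can be converted to an identity in the usual way, for instance by replacing $s\leq t$ with the equivalent equation $s\we t \approx s$ (or $s\vee t\approx t$). Thus 1) becomes $e\we((x\we y)\ra y) \approx e$; 2) becomes $(x\ra y)\we((z\we e)\ra(x\ra y)) \approx x\ra y$; 3) becomes $(x\cdot(x\ra y))\we y \approx x\cdot(x\ra y)$; and 6) becomes $(e\ra y)\we(x\ra(x\cdot(e\ra y))) \approx e\ra y$. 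After these rewrites, all six conditions are honest identities in the language $(\we,\vee,\cdot,\ra,e)$.

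Combining the two lists yields a finite set of identities whose models are exactly the srl-monoids, by Theorem \ref{t1}. Hence $\gSRL$ is an equational class, and by Birkhoff's theorem it is a variety. There is no substantive obstacle here; the only thing to be careful about is that the axioms used to define commutative l-monoids and the six extra conditions are \emph{all} expressible purely as equations in the fixed signature $(\we,\vee,\cdot,\ra,e)$ of arity $(2,2,2,2,0)$, which is exactly what the inequality-to-equation translation above guarantees.
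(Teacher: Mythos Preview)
Your proposal is correct and matches the paper's approach: the corollary is stated without proof in the paper, as an immediate consequence of Theorem \ref{t1}, and you have simply spelled out the implicit observation that the axioms there (including the inequalities, via the standard $s\we t\approx s$ rewriting) constitute an equational basis in the signature $(\we,\vee,\cdot,\ra,e)$.
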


It is interesting to note that if $(A,\we,\vee,\cdot,\ra,e)$ is an
algebra of type $(2,2,2,2,0)$ such that $(A,\we,\vee)$ is a lattice,
$(A,\cdot,e)$ is a monoid and $\cdot$ is a monotone map
then the following two conditions are equivalent:
\begin{enumerate}[\normalfont 1)]
\item $a\cdot (a\ra b) \leq b$ for every $a,b\in A$.
\item For every $a,b,c\in A$, if $a\leq b\ra c$ then $b\cdot a \leq c$.
\end{enumerate}
This property is a particular case of \cite[Proposition 1.1]{CSM}.
Therefore, it follows from Theorem \ref{t1} that condition 2) previously
mentioned is satisfied in any srl-monoid.

\begin{lem}\label{l1}
Let $(A,\we,\vee,\cdot,\ra,e)$ be an srl-monoid.

The following conditions are satisfied for every $a,b,c\in A$:
\begin{enumerate}[\normalfont 1)]
\item $(a\vee b) \ra c = (a\ra c) \wedge (b\ra c)$,
\item $(a\ra b)\cdot (b\ra c) \leq a\ra c$,
\item $e\leq a\ra a$, \item $a\leq b$ if and only if $e\leq a\ra b$,
\item $e\ra a \leq a$,
\item $e\ra (a\ra b) = a\ra b$,
\item $e\ra a \leq b \ra (a\cdot b)$.
\end{enumerate}
\end{lem}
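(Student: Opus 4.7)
The plan is to prove all seven items by direct appeal to the residuation property of an srl-monoid, namely that for $q \in Q$ one has $a \cdot q \leq b$ iff $q \leq a \ra b$, together with the fact that the outputs of $\ra$ lie in $Q$ and that $Q$ is closed under $\we$, $\vee$, $\cdot$. I would carry out the items in a logical order, proving the easy ones first and using them as the previous items.

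First I would dispatch (5) and (6) immediately. For (5), property 3 of Theorem \ref{t1} gives $a \cdot (a \ra b) \leq b$; specialising to $a = e$ and using $e \cdot x = x$ yields $e \ra b \leq b$. For (6), note that $a \ra b \in Q$ by the construction of $Q$ in the proof of Theorem \ref{t1}, so $e \ra (a \ra b) = a \ra b$ by definition of $Q$.

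Next I would treat (3), (4), (2), (7). For (3): $a \cdot e = a \leq a$ together with $e \in Q$ yields $e \leq a \ra a$ by residuation. For (4): if $a \leq b$, then $a \cdot e = a \leq b$ gives $e \leq a \ra b$; conversely, if $e \leq a \ra b$, then $a = a \cdot e \leq a \cdot (a \ra b) \leq b$, using monotonicity of $\cdot$ and property 3 of Theorem \ref{t1}. For (2): $(a \ra b) \cdot (b \ra c) \in Q$ because $Q$ is closed under $\cdot$; and $a \cdot ((a \ra b) \cdot (b \ra c)) = (a \cdot (a \ra b)) \cdot (b \ra c) \leq b \cdot (b \ra c) \leq c$ by associativity/commutativity, monotonicity, and property 3 of Theorem \ref{t1} applied twice; hence residuation gives $(a \ra b) \cdot (b \ra c) \leq a \ra c$. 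For (7): from (5) we have $e \ra a \leq a$, so by monotonicity and commutativity $b \cdot (e \ra a) \leq b \cdot a = a \cdot b$; since $e \ra a \in Q$, residuation yields $e \ra a \leq b \ra (a \cdot b)$.

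Finally I would prove (1) by establishing both inequalities. For $\leq$: $a \leq a \vee b$ gives $a \cdot ((a \vee b) \ra c) \leq (a \vee b) \cdot ((a \vee b) \ra c) \leq c$; since $(a \vee b) \ra c \in Q$, residuation yields $(a \vee b) \ra c \leq a \ra c$, and symmetrically $\leq b \ra c$, so $(a \vee b) \ra c \leq (a \ra c) \we (b \ra c)$. For $\geq$: writing $d = (a \ra c) \we (b \ra c) \in Q$, the l-monoid distributivity gives $(a \vee b) \cdot d = (a \cdot d) \vee (b \cdot d) \leq (a \cdot (a \ra c)) \vee (b \cdot (b \ra c)) \leq c \vee c = c$, hence $d \leq (a \vee b) \ra c$. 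The only mild subtlety throughout, and the main bookkeeping point, is verifying at each step that the element to which residuation is applied genuinely lies in $Q$; no step involves real difficulty beyond that.
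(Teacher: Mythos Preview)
Your proof is correct and follows essentially the same approach as the paper's: both arguments exploit the residuation property with respect to $Q$, verify membership in $Q$ at each step, and use the l-monoid distributivity for item (1). The only cosmetic differences are that the paper proves the items in numerical order (using (1) to get antitonicity in the first argument for the forward direction of (4)) and routes (7) through condition 6) of Theorem~\ref{t1} rather than applying residuation directly, but the underlying ideas are identical.
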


\begin{proof}
1) Since
$a\cdot ((a\vee b)\ra c)  \leq  (a\vee b) \cdot ((a\vee b) \ra c) \leq   c$
then $(a\vee b)\ra c \leq a\ra c$. Similarly, $(a\vee b)\ra c \leq
b\ra c$. Thus, $(a\vee b)\ra c$ is a lower bound of $\{a\ra c,
b\ra c\}$ in $Q$. Let $d$ be a lower bound of $\{a\ra c, b\ra c\}$
in $Q$, so $d\in Q$, $d\leq a\ra c$ and $d\leq b\ra c$, so
$d\cdot a \leq c$ and $d\cdot b \leq  c$. Hence,
$d\cdot (a\vee b)   =  (d\cdot a) \vee (d\cdot b) \leq   c$.
Then $d\leq (a\vee b)\ra c$. Thus,
$(a\vee b) \ra c = (a\ra c) \wedge (b\ra c)$.

2) Since $a\cdot(a\ra b)\leq b$ and $b\cdot (b\ra c)\leq c$ then
$a\cdot (a\ra b) \cdot (b\ra c)  \leq  b \cdot (b\ra c) \leq   c$.
But $(a\ra b)\cdot (b\ra c) \in Q$, so
$(a\ra b)\cdot (b\ra c) \leq a\ra c$.

3) The inequality $e\leq a\ra a$ follows from that $e\in Q$ and $e\cdot a = a\leq a$.

4) Suppose $a\leq b$. Then $a\ra b \geq b\ra b$. But $b\ra b \geq e$, so $e\leq a\ra b$.
Conversely, assume that $e\leq a\ra b$. Hence,
$a =  a \cdot e \leq   a\cdot (a\ra b) \leq   b$.
Then $a\leq b$.

5) The inequality $e\ra a \leq a$ follows that $e\ra a = e\cdot (e\ra a) \leq a$.

6) It is immediate that the equality $e\ra (a\ra b) = a\ra b$ is satisfied.

7) Finally we will see that $e\ra a \leq b \ra (a\cdot b)$. First note that
$e\ra a \leq b \ra (b\cdot (e\ra a))$. Since $e\ra a \leq a$ then
$b\cdot (e\ra a) \leq a\cdot b$, so $b \ra (b\cdot (e\ra a)) \leq
b \ra (a\cdot b)$. Therefore, $e\ra a \leq b \ra (a\cdot b)$.
\end{proof}

Let $(A,\we,\vee,\cdot,\ra,e)$ be an algebra of type
$(2,2,2,2,0)$. For every $a\in A$ we define
\[
\square(a) = e\ra a.
\]

In the proof of the following corollary we will use Theorem \ref{t1}
and Lemma \ref{l1}.

\begin{cor}
Let $(A,\we,\vee,\cdot,\ra,e)$ be an algebra of type
$(2,2,2,2,0)$.

Then $(A,\we,\vee,\cdot,\ra,e)$ is an srl-monoid if
and only if $(A,\we,\vee,\cdot,e)$ is an l-commutative
monoid and the following identities are satisfied:
\begin{enumerate}[\normalfont 1)]
\item $z\ra (x\we y) = (z\ra x)\we (z\ra y)$,
\item $(x\vee y) \ra z = (x\ra z) \we (y\ra z)$,
\item $(x\ra y)\cdot (y\ra z) \leq x\ra z$,
\item $e\leq x\ra x$,
\item $x\cdot (x\ra y) \leq y$,
\item $x\ra y \leq (z\we e)\ra (x\ra y)$,
\item $\square(\square(x)\cdot \square(y)) = \square(x)\cdot \square(y)$,
\item $\square(y) \leq x \ra (x\cdot \square(y))$.
\end{enumerate}
\end{cor}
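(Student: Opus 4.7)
The plan is to show the two directions by direct comparison with Theorem \ref{t1}, treating this corollary as a repackaging that trades one of the axioms of Theorem \ref{t1} for the convenient monotonicity-style identities of Lemma \ref{l1}.

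For the forward direction, assume $(A,\we,\vee,\cdot,\ra,e)$ is an srl-monoid. Then conditions 1, 5, 6 and (after unfolding $\square(a)=e\ra a$) 7, 8 of the corollary are literally conditions 4, 3, 2, 5, 6 of Theorem \ref{t1}, while conditions 2, 3, 4 of the corollary are respectively parts 1, 2, 3 of Lemma \ref{l1}. So this direction is essentially bookkeeping.

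For the converse, suppose that $(A,\we,\vee,\cdot,e)$ is a commutative l-monoid and the eight identities of the corollary hold. It suffices to verify the six axioms of Theorem \ref{t1}. Five of them are immediate: axiom 4 of Theorem \ref{t1} is identity 1, axiom 3 is identity 5, axiom 2 is identity 6, and axioms 5 and 6 are identities 7 and 8 after reabbreviating $\square(a)=e\ra a$. The only axiom of Theorem \ref{t1} that is not directly present is the first, namely $e\leq (x\we y)\ra y$. To derive this, I would substitute $z:=x\we y$ in identity 1 of the corollary, obtaining
\[
(x\we y)\ra (x\we y) \;=\; ((x\we y)\ra x)\we ((x\we y)\ra y),
\]
and then use identity 4 (with $x$ replaced by $x\we y$) to conclude that $e\leq (x\we y)\ra (x\we y)$, hence $e\leq (x\we y)\ra y$. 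Once this is established, Theorem \ref{t1} applies and gives that $(A,\we,\vee,\cdot,\ra,e)$ is an srl-monoid.

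There is no real obstacle here: the argument is a dictionary between the two axiomatizations, and the only non-trivial step is the short derivation of $e\leq (x\we y)\ra y$ above, which just combines identity 1 specialized at $z=x\we y$ with the reflexivity axiom (identity 4). Every other axiom in Theorem \ref{t1} either appears verbatim in the corollary's list or is obtained by unfolding the abbreviation $\square$.
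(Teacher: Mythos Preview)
Your proposal is correct and follows essentially the same approach as the paper: both directions are reduced to Theorem~\ref{t1} (with Lemma~\ref{l1} supplying the extra identities in the forward direction), and the only nontrivial step in the converse is deriving $e\leq (x\we y)\ra y$. The sole cosmetic difference is that the paper obtains this from identity~4 together with antitonicity in the first argument (a consequence of identity~2), arguing $e\leq b\ra b\leq (a\we b)\ra b$, whereas you use identity~4 together with identity~1 specialized at $z=x\we y$; both routes are equally short and valid.
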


\begin{proof}
After Theorem \ref{t1} and Lemma \ref{l1} the implication from left to right is immediate.
Conversely, notice that conditions 1), 5), 6), 7) and 8) are exactly conditions
4), 3), 2), 5) and 6) of Theorem \ref{t1}, so we only need to prove the first equation
of Theorem \ref{t1}. Let $a,b \in A$.
Since $a\we b \leq b$ then $e \leq b\ra b \leq (a\we b) \ra b$,
so $e\leq (a\we b)\ra b$.
\end{proof}

In commutative residuated lattices it holds that
\begin{equation} \label{ewr}
a\cdot b \leq c\;\text{if and only if}\; a\leq b\ra c.
\end{equation}

Now we will prove a weak version of (\ref{ewr}) in the framework of srl-monoids.

\begin{prop} \label{c1}
Let $A\in \gSRL$ and $a,b,c\in A$. The following conditions are true:
\begin{enumerate}[\normalfont 1)]
\item If $a\leq b\ra c$ then $a\cdot b\leq c$.
\item If $a\cdot b \leq c$ then $\square(a) \leq b\ra c$.
\end{enumerate}
\end{prop}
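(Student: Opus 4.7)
The plan is to prove each part by combining basic monotonicity with one specific fact already established for srl-monoids.

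For part 1), I would argue directly: assume $a \le b\ra c$. By monotonicity of $\cdot$ (established in the proof of Theorem \ref{t1} as item III)), multiplying both sides by $b$ yields $a\cdot b \le (b\ra c)\cdot b$. Using commutativity and item 3) of Theorem \ref{t1} (which states $x\cdot(x\ra y)\le y$), we get $(b\ra c)\cdot b = b\cdot(b\ra c) \le c$, so $a\cdot b \le c$ as required. This is essentially the same observation already flagged in the remark after Theorem \ref{t1}.

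For part 2), the main tool is item 7) of Lemma \ref{l1}, namely $e\ra a \le b\ra(a\cdot b)$, combined with monotonicity of $\ra$ in its second argument. Assume $a\cdot b \le c$. I would first note that monotonicity in the right argument follows from item 4) of Theorem \ref{t1}: since $a\cdot b \le c$ means $a\cdot b = (a\cdot b)\we c$, applying $b\ra(-)$ and the identity $b\ra(x\we y) = (b\ra x)\we (b\ra y)$ yields $b\ra(a\cdot b) \le b\ra c$. Chaining with Lemma \ref{l1}(7), we conclude $\square(a) = e\ra a \le b\ra(a\cdot b) \le b\ra c$.

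Neither step presents a real obstacle; the main point to verify carefully is that the needed monotonicity facts are indeed available at this stage of the paper, since both follow from Theorem \ref{t1} and its proof, both are explicitly cited in Lemma \ref{l1}, and commutativity of $\cdot$ is built into the definition of srl-monoid. The statement should therefore admit a very short proof of three or four lines.
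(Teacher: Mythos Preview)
Your proof is correct and follows essentially the same route as the paper: the paper declares 1) ``immediate'' (it was already observed right after Theorem~\ref{t1}) and proves 2) by exactly the chain $\square(a)=e\ra a\le b\ra(a\cdot b)\le b\ra c$, invoking Lemma~\ref{l1}(7) and monotonicity of $\ra$ in the second argument. Your only addition is spelling out why each step is justified, which is fine.
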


\begin{proof}
Condition 1) is immediate.
In order to prove 2), suppose that $a\cdot b\leq c$. Then, by 7) of Lemma \ref{l1},
$\square(a)  \leq  b\ra (a\cdot b) \leq   b\ra c$,
so $\square(a)\leq b\ra c$.
\end{proof}

\section{Strongly convex subalgebras} \label{s2}

In this section we prove that for every $A\in \gSRL$ there exists an order isomorphism between
the lattice of congruences of $A$ and the lattice of strongly convex subalgebras of $A$.
\vspace{1pt}

We start with some preliminary results.
\vspace{1pt}

Let $A \in \gSRL$. We write $\ConA$ for the set of congruences of $A$.
If $\theta \in \ConA$ and $a\in A$, we write $a/\theta$ for the equivalence
class of $a$. Let $H$ be a \emph{convex subalgebra} of $A$, that is,
a subalgebra of $A$ such that for every $a,b,c\in A$,
if $a,b\in H$ and $a\leq c\leq b$ then $c\in H$.
We write $\SU$ for the set of convex subalgebras of $A$.
\vspace{3pt}

The following definition will play a fundamental role in this section.

\begin{defn}
Let $A \in \gSRL$. A subset $H$ of $A$ is said to be a strongly convex subalgebra of
$A$ if it is a convex subalgebra of $A$ such that for every $a\in A$ and $h\in H$ such
that $a\cdot h \leq e \leq h\ra a$ it holds that $a\in H$.
\end{defn}

We write $\SUs$ for the set of strongly convex subalgebras of $A$.
\vspace{3pt}

The following question naturally arises: is there any
$A\in \gSRL$ and $H\in \SU$ such that $H\notin \SUs$?
The answer is positive. Indeed, consider Example \ref{ex1}.
We have that $\{0,e\} \in \SU$ and this is not a strongly
convex subalgebra because $1\cdot 0 \leq e \leq 0\ra 1$ but $1\notin H$.

\vspace{3pt}
The aim of this section is to show that there exists an
order isomorphism between $\ConA$ and $\SUs$.

\begin{lem} \label{c2}
Let $A \in \gSRL$.
If $\theta \in \ConA$, then $e/\theta \in \SUs$.
\end{lem}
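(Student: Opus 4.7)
The plan is to verify the three defining properties of a strongly convex subalgebra in order of increasing subtlety: that $e/\theta$ is a subalgebra, that it is convex, and finally that it satisfies the ``strong'' closure condition. The main obstacle is the third point, where the hypotheses $a\cdot h \leq e$ and $e \leq h\ra a$ must be combined in a symmetric manner with the congruence information $h\,\theta\, e$.

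First I would check that $e/\theta$ is closed under all operations. Since $\theta$ is a congruence, for $a,b \in e/\theta$ we have $a \star b \;\theta\; e \star e$ for $\star \in \{\we,\vee,\cdot,\ra\}$. Each of $e\we e, e\vee e, e\cdot e$ equals $e$ trivially, and $e\ra e = e$ follows from items 3) and 5) of Lemma \ref{l1}, which give $e \leq e\ra e \leq e$. Hence $e/\theta$ is a subalgebra.

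Next, for convexity, suppose $a,b \in e/\theta$ and $a \leq c \leq b$. The idea is to use $c = c\vee a \;\theta\; c\vee e$ and $c = c\we b\;\theta\; c\we e$, which together give $c\vee e\;\theta\; c\;\theta\; c\we e$. Since $(c\vee e)\we e = e$ tautologically, applying the congruence yields $e\;\theta\;c\we e\;\theta\; c$, so $c \in e/\theta$.

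The heart of the proof is the strongly convex condition: assume $h\;\theta\;e$ and $a\cdot h \leq e \leq h\ra a$, and conclude $a\;\theta\;e$. I would extract two inequalities: the first, $a\cdot h \leq e$, is given directly; the second, $h \leq a$, follows from $e \leq h\ra a$ by Proposition \ref{c1}(1) (applied with $h\cdot e \leq a$). Using $h\;\theta\;e$, these translate into two facts about the class of $a$: on one hand, $a = a\cdot e \;\theta\; a\cdot h \leq e$, so $a \;\theta\; a\cdot h$ combined with $a\cdot h \leq e$ gives $a\;\theta\; a\we e$ (since $(a\cdot h)\we e = a\cdot h$); on the other hand, $a = a\vee h \;\theta\; a\vee e$. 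Finally I would intersect with $e$: the identity $(a\vee e)\we e = e$, transported across $a\vee e \;\theta\; a$, yields $e \;\theta\; a\we e \;\theta\; a$, which is the required $a \in e/\theta$. The delicate point here is that the two given inequalities are not themselves symmetric, but they become symmetric after applying the congruence, which is why both directions of the sandwich $a\we e \;\theta\; a\;\theta\; a\vee e$ can be produced.
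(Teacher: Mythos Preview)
Your proof is correct and follows essentially the same approach as the paper. The paper handles convexity by citing the general fact that blocks of a lattice congruence are convex sublattices, whereas you spell this out directly; for the strongly convex condition both you and the paper use $h\,\theta\,e$ to obtain $a\cdot h\,\theta\,a$ and $h\vee a\,\theta\,e\vee a$, the only cosmetic difference being that the paper joins with $e$ (getting $(e,a\vee e)\in\theta$ and $(a,a\vee e)\in\theta$ and concluding by transitivity) while you take an extra detour through $a\wedge e$.
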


\begin{proof}
Let $A \in \gSRL$ and $\theta \in \ConA$.
The convexity of $e/\theta$ follows from the known fact that the equivalence blocks
of a lattice congruence are convex sublattices. We now prove that $e/\theta$ is a subalgebra.
To this end, let $a,b\in e/\theta$, i.e., $(a,e), (b,e)\in \theta$. It is immediate that
$(a\we b,e), (a\vee b,e)$ and $(a\cdot b,e)$ are elements of $\theta$. Since $e\ra e = e$ we
also have that $(a\ra b,e)\in \theta$. Thus, $e/\theta$ is a convex subalgebra of $A$.

Finally we will see that $e/\theta$ is a strongly convex subalgebra of $A$.
In order to show this, let $h\in e/\theta$ and $a\in A$ such
that $a\cdot h\leq e\leq h\ra a$, i.e., $a\cdot h \leq e$ and $h\leq a$. Since $(h,e)\in \theta$ then
$(a\cdot h \vee e, a\vee e) \in \theta$, i.e., $(e,a\vee e)\in \theta$. Again, since $(h,e)\in \theta$
then $(h\vee a,e\vee a)\in \theta$, i.e., $(a,e\vee a) \in \theta$. It follows from the symmetry and the
transitivity of $\theta$ that $(a,e)\in \theta$, which was our aim.
\end{proof}

Let $A\in \gSRL$ and $a,b\in A$. We define $s(a,b) = (a\ra b)\we (b\ra a)\we e$.

\begin{lem} \label{c3}
Let $A\in \gSRL$ and $\theta \in \ConA$. Then $(a,b)\in \theta$ if and only if $s(a,b)\in e/\theta$.
\end{lem}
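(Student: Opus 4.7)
The plan is to prove both directions directly, using that congruences respect all operations (in particular $\ra$) together with the defining property of $a\ra b$ and the fact that $s(a,b)\in Q$.

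For the forward direction, suppose $(a,b)\in\theta$. Since $\theta$ is a congruence, $(a\ra b,\,b\ra b)\in\theta$ and $(b\ra a,\,a\ra a)\in\theta$. Taking the meet with $e$ (also respected by $\theta$), we get that $s(a,b)=(a\ra b)\we(b\ra a)\we e$ is $\theta$-related to $(b\ra b)\we(a\ra a)\we e$. But by Lemma~\ref{l1}(3) we have $e\leq a\ra a$ and $e\leq b\ra b$, so the latter expression equals $e$. Hence $(s(a,b),e)\in\theta$, i.e.\ $s(a,b)\in e/\theta$.

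For the converse, assume $(s(a,b),e)\in\theta$. First observe that $s(a,b)\in Q$: indeed $a\ra b,\,b\ra a,\,e\in Q$ and $Q$ is closed under $\we$. Since $s(a,b)\leq a\ra b$ and $s(a,b)\in Q$, the defining property of $a\ra b$ gives $a\cdot s(a,b)\leq b$; symmetrically $b\cdot s(a,b)\leq a$. Now multiplying $(s(a,b),e)\in\theta$ by $a$ yields $(a\cdot s(a,b),\,a)\in\theta$, and taking the join with $b$ (using $a\cdot s(a,b)\leq b$, so $a\cdot s(a,b)\vee b=b$) gives $(b,\,a\vee b)\in\theta$. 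Multiplying by $b$ and joining with $a$ in the same way gives $(a,\,a\vee b)\in\theta$. Transitivity of $\theta$ then yields $(a,b)\in\theta$.

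I do not expect any substantial obstacle; the argument is entirely routine once one notices that $s(a,b)\in Q$, which is exactly what is needed to invoke the residuation property $q\leq a\ra b \Rightarrow a\cdot q\leq b$ (valid for elements of $Q$). The only subtlety to highlight is the role of the meet with $e$ in the definition of $s(a,b)$: it is what forces the quantity $(b\ra b)\we(a\ra a)\we e$ to collapse to $e$ in the forward direction, and it guarantees $s(a,b)\leq e$, which is convenient but not strictly used. The reasoning parallels, and extends, the argument in the proof of Lemma~\ref{c2}.
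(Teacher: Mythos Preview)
Your proof is correct and follows essentially the same route as the paper's: the forward direction is identical, and in the converse the paper reaches $(a,b)\in\theta$ via meets with $a\we b$ where you use joins with $a\vee b$, a purely cosmetic difference. One small remark: the observation that $s(a,b)\in Q$ is unnecessary, since the implication $q\leq a\ra b\Rightarrow a\cdot q\leq b$ holds for arbitrary $q\in A$ by monotonicity of $\cdot$ together with $a\cdot(a\ra b)\leq b$ (item 3) of Theorem~\ref{t1}, or item 1) of Proposition~\ref{c1}).
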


\begin{proof}
Let $A\in \gSRL$ and $\theta \in \ConA$. Suppose that $(a,b)\in \theta$.
In consequence, $((a\ra a) \we e, (a\ra b) \we e)\in \theta$, i.e., $(e, (a\ra b)\we e)\in \theta$.
By the same reason, $(e,(b\ra a)\we e)\in \theta$. This implies that $(e,s(a,b))\in \theta$,
so $s(a,b)\in e/\theta$. Conversely, suppose that $s(a,b)\in e/\theta$.
Hence, $(a, a\cdot s(a,b))\in \theta$ and $(b,b\cdot s(a,b))\in \theta$.
Let $x = a\cdot s(a,b)$ and $y = b\cdot s(a,b)$. It is immediate that $x\leq b$ and $y\leq a$,
i.e., $x = x\we b$ and $y = y\we a$. Since $(x,a)\in \theta$ and $(y,b)\in \theta$
then $(x\we b,a\we b)\in \theta$ and $(y\we a, b\we a)\in \theta$, i.e.,
$(x, a\we b)\in \theta$ and $(y,a\we b)\in \theta$. Thus, $(x,y)\in \theta$.
Therefore, $(a,b)\in \theta$.
\end{proof}

\begin{cor} \label{corc3}
Let $A\in \gSRL$ and $\theta,\psi \in \ConA$. Then $\theta \subseteq \psi$
if and only if $e/\theta \subseteq e/\psi$.
\end{cor}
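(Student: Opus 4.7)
The plan is to derive this corollary essentially as a direct consequence of Lemma \ref{c3}, which characterizes membership in a congruence $\theta$ in terms of membership of the element $s(a,b)$ in the single block $e/\theta$. The whole content of the corollary is that this characterization, together with the fact that the class $e/\theta$ already encodes all of $\theta$, lets us compare congruences simply by comparing their $e$-blocks.

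For the forward implication I would just unfold definitions: assume $\theta\subseteq\psi$ and take $a\in e/\theta$, i.e.\ $(a,e)\in\theta$; then $(a,e)\in\psi$, so $a\in e/\psi$. No srl-monoid machinery is needed here.

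For the reverse implication, which is the substantive half, suppose $e/\theta\subseteq e/\psi$ and take $(a,b)\in\theta$. By Lemma \ref{c3} applied to $\theta$, $s(a,b)\in e/\theta$. By the hypothesis, $s(a,b)\in e/\psi$. Applying Lemma \ref{c3} in the opposite direction to $\psi$, we conclude $(a,b)\in\psi$. Hence $\theta\subseteq\psi$.

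There is no real obstacle here: the heavy lifting has already been done in Lemma \ref{c3}. The only thing worth emphasising in the write-up is that the term $s(a,b)=(a\ra b)\we(b\ra a)\we e$ lies in the negative cone by construction (it is below $e$), so that membership in $e/\theta$ is exactly the right notion, and that the two applications of Lemma \ref{c3} use the same element $s(a,b)$ but with respect to the two different congruences $\theta$ and $\psi$.
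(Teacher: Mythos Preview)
Your proof is correct and follows exactly the same approach as the paper: the forward implication is immediate from the definitions, and the converse is obtained by applying Lemma~\ref{c3} to $\theta$ to get $s(a,b)\in e/\theta\subseteq e/\psi$ and then applying Lemma~\ref{c3} to $\psi$.
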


\begin{proof}
Let $A\in \gSRL$ and $\theta,\psi \in \ConA$. It is immediate that if
$\theta \subseteq \psi$ then $e/\theta \subseteq e/\psi$. The converse of this
property follows from a straightforward computation based in Lemma \ref{c3}.
\end{proof}

Let $A \in \gSRL$ and $H \in \SU$. We define the following subset of $A\times A$:
\[
\theta_H = \{(a,b) \in A\times A: a\cdot h\leq b\; \text{and}\; b \cdot h\leq a\; \text{for some}\; h\in H\}.
\]

\begin{lem} \label{c4}
Let $A \in \gSRL, H \in \SU$ and $a,b\in A$.
The following conditions are equivalent:
\begin{enumerate}[\normalfont a)]
\item $(a,b)\in \theta_H$.
\item $(a\ra b)\we e\in H$ and $(b\ra a)\wedge e\in H$.
\item $s(a,b)\in H$.
\item There exists $h\in H$ such that $h\leq a\ra b$ and $h\leq b\ra a$.
\end{enumerate}
\end{lem}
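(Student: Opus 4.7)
The plan is to prove a cycle of implications $(a) \Rightarrow (b) \Rightarrow (c) \Rightarrow (d) \Rightarrow (a)$. The implications $(b) \Rightarrow (c)$ and $(c) \Rightarrow (d)$ are essentially bookkeeping, and $(d) \Rightarrow (a)$ follows directly from Theorem \ref{t1}(3) and monotonicity of $\cdot$; so the work is concentrated in $(a) \Rightarrow (b)$.

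For $(a) \Rightarrow (b)$, suppose $h \in H$ satisfies $a\cdot h \leq b$ and $b\cdot h \leq a$. I intend to place $(a\ra b)\we e$ between two elements of $H$ and invoke convexity. The obvious upper bound is $e \in H$ (which lies in $H$ because $H$ is a subalgebra). For the lower bound the natural candidate $h\we e \in H$ cannot be compared with $a\ra b$ through the Galois property because $h\we e$ need not lie in $Q$; to remedy this I would pass to $\square(h\we e) = e\ra(h\we e)$, which is forced into $Q$ by Lemma \ref{l1}(6), and which lies in $H$ because $H$ is a subalgebra closed under $\ra$ and contains $e$. From $a\cdot(h\we e) \leq a\cdot h \leq b$, Proposition \ref{c1}(2) yields $\square(h\we e) \leq a\ra b$, and Lemma \ref{l1}(5) gives $\square(h\we e) \leq h\we e \leq e$. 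Hence $\square(h\we e) \leq (a\ra b)\we e \leq e$, and convexity of $H$ places $(a\ra b)\we e$ in $H$. The same argument with the roles of $a$ and $b$ swapped produces $(b\ra a)\we e \in H$.

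For $(b) \Rightarrow (c)$ I simply use that $H$ is closed under $\we$, so $s(a,b) = ((a\ra b)\we e) \we ((b\ra a)\we e) \in H$. For $(c) \Rightarrow (d)$, the element $h := s(a,b)$ itself works, since by definition $s(a,b) \leq a\ra b$ and $s(a,b) \leq b\ra a$. Finally, for $(d) \Rightarrow (a)$, given $h \in H$ with $h \leq a\ra b$ and $h \leq b\ra a$, monotonicity of $\cdot$ together with Theorem \ref{t1}(3) yields $a\cdot h \leq a\cdot(a\ra b) \leq b$ and symmetrically $b\cdot h \leq a$, so $(a,b) \in \theta_H$.

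The main obstacle, as indicated, is step $(a) \Rightarrow (b)$: one must recognise that the Galois adjunction defining $\ra$ is only available on elements of $Q$, so the witness $h \in H$ must be replaced by its boxed version $\square(h\we e)$, which simultaneously lies in $Q$ (so we can apply the half of the adjunction given by Proposition \ref{c1}(2)) and remains in $H$ (because $H$ is a subalgebra containing $e$). Once this trick is in place, convexity of $H$ finishes the argument cleanly.
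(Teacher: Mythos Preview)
Your proof is correct and follows essentially the same cycle $a)\Rightarrow b)\Rightarrow c)\Rightarrow d)\Rightarrow a)$ as the paper. The only cosmetic difference is that for $a)\Rightarrow b)$ you sandwich $(a\ra b)\we e$ using the lower bound $\square(h\we e)$ whereas the paper uses $\square(h)\we e$, but by Theorem~\ref{t1}(4) and $e\ra e=e$ these two elements coincide, so the arguments are identical.
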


\begin{proof}
a) $\Rightarrow$ b). Suppose that $(a,b)\in \theta_H$, so there exists $h\in H$ such that
$a\cdot h\leq b$ and $b \cdot h\leq a$. It follows from Proposition \ref{c1}
that $\square(h)\leq a\ra b$ and $\square(h)\leq b\ra a$. Since $h\in H$
then $\square(h)\in H$ and $\square(h)\we e \in H$. Thus,
$\square(h)\we e \leq  (a\ra b)\we e \leq   e$.
Similarly,
$\square(h)\we e  \leq  (b\ra a)\we e \leq   e$.
By the convexity of $H$ we have that $(a\ra b)\we e \in H$ and $(b\ra a)\we e \in H$.

b) $\Rightarrow$ c). Now assume that $(a\ra b)\we e \in H$ and $(b\ra a)\we e \in H$.
Then $(a\ra b)\we e \we (b\ra a)\we e \in H$, i.e., $s(a,b)\in H$.

c) $\Rightarrow$ d). Suppose that $s(a,b)\in H$. Hence,
$s(a,b) \leq a\ra b$ and $s(a,b) \leq b\ra a$.

d) $\Rightarrow$ a). Finally, suppose that there exists $h\in H$ such that $h\leq a\ra b$
and $h\leq b\ra a$. Thus,
$a \cdot h  \leq   a \cdot (a\ra b) \leq   b$.
Analogously, $b \cdot h \leq   a$.
Hence, $(a,b)\in \theta_H$.
\end{proof}

\begin{lem} \label{c5}
Let $A \in \gSRL$. If $H\in \SU$, then $\theta_H \in \ConA$.
\end{lem}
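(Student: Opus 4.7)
The plan is to verify that $\theta_H$ is an equivalence relation and then check compatibility with each of the four operations $\we, \vee, \cdot, \ra$, using freely the equivalent formulations of membership in $\theta_H$ provided by Lemma \ref{c4}. Reflexivity is immediate from $e \in H$ and $e \leq a\ra a$ (Lemma \ref{l1}(3)); symmetry is built into the definition. For transitivity, if $h_1, h_2 \in H$ witness $(a,b)$ and $(b,c)$ in the sense of characterization (d), then $h_1\cdot h_2 \in H$ and two applications of Lemma \ref{l1}(2) give $h_1 h_2 \leq (a\ra b)(b\ra c) \leq a\ra c$ and symmetrically $h_1 h_2 \leq c\ra a$.

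For compatibility with the lattice operations I would use characterization (d) with the witness $h_1 \we h_2 \in H$. The key facts are the antitonicity of $\ra$ in the first argument and the monotonicity in the second (both immediate from the maximum characterization and Theorem \ref{t1}(3)), together with the identities $(x\vee y)\ra z = (x\ra z)\we (y\ra z)$ from Lemma \ref{l1}(1) and $z\ra (x\we y) = (z\ra x)\we (z\ra y)$ from Theorem \ref{t1}(4). For instance, compatibility with $\we$ follows from $h_1 \leq a\ra b \leq (a\we c)\ra b$ and $h_2 \leq c\ra d \leq (a\we c)\ra d$, hence $h_1 \we h_2 \leq (a\we c)\ra (b\we d)$; the case of $\vee$ is analogous. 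Compatibility with $\cdot$ is cleanest via the original definition: $h_1\cdot h_2 \in H$ witnesses $(a\cdot c, b\cdot d) \in \theta_H$, since $(ac)(h_1 h_2) = (ah_1)(ch_2) \leq bd$ and symmetrically.

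The most delicate step, and the main obstacle, is compatibility with $\ra$. Given witnesses $h_1, h_2 \in H$ for $(a,b)$ and $(c,d)$, two applications of Lemma \ref{l1}(2) yield $(h_1 h_2)\cdot(a\ra c) \leq (b\ra a)(a\ra c)(c\ra d) \leq b\ra d$, and symmetrically $(h_1 h_2)\cdot(b\ra d) \leq a\ra c$. One cannot upgrade these to $h_1 h_2 \leq (a\ra c)\ra (b\ra d)$ directly, because $h_1 h_2$ need not lie in $Q$; the fix is to invoke Proposition \ref{c1}(2), which passes to $\square(h_1 h_2) = e\ra (h_1 h_2)$ and produces the required inequalities $\square(h_1 h_2) \leq (a\ra c)\ra (b\ra d)$ and $\square(h_1 h_2) \leq (b\ra d)\ra (a\ra c)$. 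Since $H$ is a subalgebra containing $e$ and closed under $\ra$, we have $\square(h_1 h_2) \in H$, so characterization (d) of Lemma \ref{c4} gives $(a\ra c, b\ra d) \in \theta_H$, completing the verification.
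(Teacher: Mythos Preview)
Your proof is correct and follows essentially the same approach as the paper: verify that $\theta_H$ is an equivalence relation and then check compatibility with each operation using the characterizations of Lemma~\ref{c4}. The one point worth noting is the $\ra$ case, which you call the ``main obstacle'' and route through $\square$ and characterization (d); the paper avoids this detour entirely, because once you have $(h_1 h_2)\cdot(a\ra c) \leq b\ra d$ and $(h_1 h_2)\cdot(b\ra d) \leq a\ra c$ with $h_1 h_2 \in H$, characterization (a) already gives $(a\ra c, b\ra d) \in \theta_H$ directly---no passage through $\square$ or Proposition~\ref{c1} is needed.
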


\begin{proof}
It is routine to prove that $\theta_H$ is an equivalence relation.

Let $(a,b), (c,d)\in \theta_H$. First we will
show that $(a\cdot c,b\cdot d), (a\vee c,b\vee d) \in \theta_H$.
Since $(a,b), (c,d)\in \theta_H$ then it follows from Lemma \ref{c4}
that there exist $h, j\in H$ such that $h\leq a\ra b$, $h\leq b\ra a$,
$j\leq c\ra d$ and $j\leq d\ra c$. Clearly we can replace $h$ and $j$ by $k = h\wedge j$.
Then
\[
(a\cdot c)\cdot k^2 =    (a\cdot k) \cdot (c\cdot k) \leq b\cdot d,
\]
i.e., $(a\cdot c)\cdot k^2 \leq b\cdot d$. In a similar way we can show that
$(b\cdot d)\cdot k^2 \leq a\cdot c$,
so $(a\cdot c, b\cdot d) \in \theta_H$.
Besides
\[
(a\vee c)\cdot k  =    (a\cdot k) \vee (c\cdot k) \leq b \vee d.
\]
Analogously we can show that $(b\vee d)\cdot k \leq a\vee c$, so
$(a\vee c, b\vee d) \in \theta_H$.
Besides,
\[
k \cdot (a\ra c)\cdot k   \leq    (b\ra a) \cdot (a\ra c)\cdot (c\ra d) \leq  b\ra d.
\]
Hence, $(a\ra c)\cdot k^{2} \leq b\ra d$. The inequality
$(b\ra d) \cdot k^{2}\leq a\ra c$ follows in like manner.
Thus, $(a\ra c, b\ra d) \in \theta_H$.

Finally we will show that $(a\we c, b\we d) \in \theta_H$.
In order to prove it, note that
\begin{align*}
(a\wedge c) \ra (b\wedge d)  & =    ((a\wedge c)\ra b)\wedge((a\wedge c)\ra d)\\
                             & \geq  (a\ra b) \wedge (c\ra d)\\
                             & \geq k.
\end{align*}
Analogously, $(b\wedge d)\ra (a\wedge c)\geq k$.
Therefore, $(a\wedge c, b\wedge d)\in \theta_H$.
\end{proof}

\begin{thm}\label{teoc}
If $A$ is a member of $\gSRL$ then $\ConA$ is order isomorphic to $\SUs$.
The order isomorphism is established via the assignments $\theta \mapsto e/\theta$ and
$H\mapsto \theta_H$.
\end{thm}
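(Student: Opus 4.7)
The plan is to verify that $\Phi \colon \theta \mapsto e/\theta$ and $\Psi \colon H \mapsto \theta_H$ are well-defined, mutually inverse, order-preserving maps between $\ConA$ and $\SUs$. Almost all the work has been done in the preceding lemmas, so the proof mainly consists in assembling them. Well-definedness is immediate: $\Phi(\theta)\in\SUs$ is Lemma \ref{c2}, and for $H\in\SUs\subseteq\SU$ the relation $\Psi(H)=\theta_H$ belongs to $\ConA$ by Lemma \ref{c5}.

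For $\Psi\circ\Phi=\mathrm{id}_{\ConA}$, fix $\theta\in\ConA$. By Lemma \ref{c3}, $(a,b)\in\theta$ iff $s(a,b)\in e/\theta$; by Lemma \ref{c4}(c) applied with $H=e/\theta$, the same condition $s(a,b)\in e/\theta$ is equivalent to $(a,b)\in\theta_{e/\theta}$. Hence $\theta=\theta_{e/\theta}$. For $\Phi\circ\Psi=\mathrm{id}_{\SUs}$ I must show $e/\theta_H=H$ for every $H\in\SUs$. The inclusion $H\subseteq e/\theta_H$ is straightforward: for $a\in H$, the element $s(a,e)=(a\ra e)\we(e\ra a)\we e$ lies in $H$ because $H$ is a subalgebra containing $a$ and $e$, and Lemma \ref{c4}(c) then yields $(a,e)\in\theta_H$. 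The reverse inclusion $e/\theta_H\subseteq H$ is where the strong convexity hypothesis becomes indispensable: if $a\in e/\theta_H$, unpacking the definition of $\theta_H$ gives $h\in H$ with $a\cdot h\leq e$ and $h=e\cdot h\leq a$; from $h\leq a$ and Lemma \ref{l1}(4) we obtain $e\leq h\ra a$, so $a\cdot h\leq e\leq h\ra a$, and strong convexity of $H$ forces $a\in H$.

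Monotonicity of both $\Phi$ and $\Psi$ is immediate from the definitions (and is also recorded for $\Phi$ in Corollary \ref{corc3}), so $\Phi$ and $\Psi$ are mutually inverse monotone bijections, establishing the order isomorphism. The principal subtlety, and indeed the whole reason for introducing $\SUs$ in place of $\SU$, is the reverse inclusion $e/\theta_H\subseteq H$: the example noted right after the definition of strongly convex subalgebra shows that mere convexity would not suffice, and the additional closure condition imposed on strongly convex subalgebras is exactly what the argument above needs.
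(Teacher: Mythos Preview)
Your proof is correct and follows essentially the same route as the paper's: well-definedness via Lemmas \ref{c2} and \ref{c5}, the identity $\theta=\theta_{e/\theta}$ via Lemma \ref{c3} (together with Lemma \ref{c4}), the identity $e/\theta_H=H$ via the subalgebra property for one inclusion and strong convexity for the other, and monotonicity via Corollary \ref{corc3} and Lemma \ref{c4}. The only cosmetic difference is that you make explicit the use of Lemma \ref{l1}(4) to pass from $h\leq a$ to $e\leq h\ra a$, which the paper leaves implicit.
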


\begin{proof}
Let $F:\ConA \ra \SUs$ and $G: \SUs \ra \ConA$ be the maps given by $F(\theta) = e/\theta$
and $G(H) = \theta_H$ respectively.
The maps $F$ and $G$ are well-defined by Lemmas \ref{c2} and \ref{c5} respectively.

Let $\theta \in \ConA$. It follows from Lemma \ref{c3} that $\theta = \theta_{e/\theta}$,
i.e., $G(F(\theta)) = \theta$. Let $H\in \SUs$.
We will prove that $H = e/\theta_H$. Let $a\in H$.
Since $H$ is a subalgebra of $A$ and $a\in H$ then $a\ra e, e\ra a, e\in H$.
Hence, $s(a,e)\in H$, so it follows from Lemma \ref{c4} that $(a,e)\in \theta_H$,
i.e., $a\in e/\theta_H$. Thus, $H\subseteq e/\theta_H$. Conversely, let
$a\in e/\theta_H$, i.e., $(a,e) \in \theta_H$. Hence, there exists $h\in H$ such that
$a\cdot h \leq e$ and $h\leq a$. Since $H$ is a strongly convex subalgebra of $A$ we
obtain that $a\in H$. Thus, $e/\theta_H\subseteq H$. We have proved the equality $e/\theta_H = H$,
i.e., $F(G(H)) = H$. Hence, $F$ and $G$ are bijective maps.

It follows from Corollary \ref{corc3} that $F$ is an order isomorphism.
A straightforward computation based in Lemma \ref{c4} shows that if $H_1, H_2\in \SUs$
then $H_1 \subseteq H_2$ if and only if $\theta_{H_1} \subseteq \theta_{H_2}$. Therefore,
$G$ is also an order isomorphism.
\end{proof}

Let $(A,\leq)$ be a poset and $H\subseteq A$. We say that $H$ is an \emph{upset} if for
every $a,b\in A$, if $a\leq b$ and $a\in H$ then $b\in H$.
\vspace{1pt}

The following definition will be used in order to give a simpler description of the
congruences of the algebras of $\igSRL$.

\begin{defn}
Let $A\in \igSRL$ and $H\subseteq A$. We say that $H$ is a filter if $1\in H$, $H$
is an upset and $a\cdot b \in H$ whenever $a,b\in H$.
If in addition $H$ is closed under $\square$ we say that $H$ is a $\square$-filter.
\end{defn}

For $A\in \igSRL$ we write $\mathrm{Fil}(A)$ for the set of filters of $A$ and
$\square \mathrm{Fil}(A)$ for the set of $\square$-filters of $A$.

\begin{lem}\label{lemteoc}
Let $A\in \igSRL$ and $H\subseteq A$. The following conditions are equivalent:
\begin{enumerate}[\normalfont 1)]
\item $H \in \SUs$.
\item $H \in \SU$.
\item $H \in \square\mathrm{Fil}(A)$.
\end{enumerate}
\end{lem}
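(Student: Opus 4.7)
The plan is to prove the cycle $1) \Rightarrow 2) \Rightarrow 3) \Rightarrow 1)$. The first implication is immediate from the definition of strongly convex subalgebra.

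For $2) \Rightarrow 3)$, assume $H \in \SU$. Since $e = 1$ is the top element and $H$ is a subalgebra containing $e$, convexity upgrades to the upset property: if $a \in H$ and $a \leq b$, then squeezing $a \leq b \leq 1$ with both endpoints in $H$ forces $b \in H$. Closure under the fundamental operations $\cdot$ and $\ra$ is part of being a subalgebra, and consequently $\square(a) = e \ra a$ lies in $H$ whenever $a \in H$, because $e \in H$. Together with $e \in H$ these are exactly the $\square$-filter axioms.

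For $3) \Rightarrow 1)$, assume $H$ is a $\square$-filter. The main task is to verify the subalgebra axioms; convexity and strong convexity will then follow almost for free. Closure under $\vee$ is the upset property. Closure under $\we$ follows from integrality (since $a,b \leq e$ gives $a\cdot b \leq a \we b$) together with filter-closure under $\cdot$ and the upset property. Closure under $\cdot$ is a filter axiom. Closure under $\ra$ is the most delicate step: for $a,b \in H$, integrality gives $b \cdot a \leq b$, so Proposition \ref{c1}(2) yields $\square(b) \leq a \ra b$; the $\square$-filter hypothesis gives $\square(b) \in H$, and the upset property then delivers $a \ra b \in H$. Convexity is an immediate consequence of the upset property. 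For strong convexity, the integral hypothesis makes the premise $a\cdot h \leq e \leq h \ra a$ collapse to just $h \leq a$: the left inequality is vacuous since $e=1$, and the right becomes $h \leq a$ by Lemma \ref{l1}(4); the upset property then gives $a \in H$.

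The one step that genuinely uses $\square$-closure, rather than merely filter-hood, is closure under $\ra$; this is where the distinction between a filter and a $\square$-filter is essential, and is the main obstacle in the cycle.
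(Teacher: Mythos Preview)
Your proof is correct and uses essentially the same ideas as the paper's: the paper proves $1)\Leftrightarrow 2)$ and $2)\Leftrightarrow 3)$ separately while you run the cycle $1)\Rightarrow 2)\Rightarrow 3)\Rightarrow 1)$, but the substantive steps---convexity plus $e=1$ yielding the upset property, the inequality $\square(b)\leq a\ra b$ (from $b\cdot a\leq b$ in the integral case) to get closure under $\ra$, and the collapse of the strong-convexity premise to $h\leq a$---are identical in both arguments.
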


\begin{proof}
Let $A\in \igSRL$ and $H \subseteq A$.
It is immediate that if $H \in \SUs$ then $H \in \SU$. Conversely,
let $H\in \SU, a\in A$ and $h\in H$
such that $a\cdot h\leq 1$ and $h\leq a$ (in this case the first inequality is redundant).
In particular, $h\leq a \leq 1$. Since $h,1\in H$ then it follows from the convexity of
$H$ that $a\in H$. Hence, $H\in \SUs$. We have proved the equivalence between 1) and 2).

Now we will show the equivalence between 2) and 3).
Let $H \in \SU$. In order to show that $H$ is an upset, let $h\in H$ and $a\in A$ such that
$h\leq a$. Since $a\leq 1$, it follows from the convexity of $H$ that $a\in H$. Thus,
$H \in \square\mathrm{Fil}(A)$. Conversely, suppose that $H \in \square\mathrm{Fil}(A)$. In order to
show that $H$ is a subalgebra of $A$, notice that for $a,b\in H$ we have that $a\cdot b \leq a\we b$
and $a\cdot b \leq a\vee b$, so the fact that $H$ is an upset implies that $a\we b \in H$ and
$a\vee b\in H$. Besides, $\square(b) \leq a\ra b$. Since $\square(b)\in H$ and $H$ is
an upset, $a\ra b\in H$. Hence, $H$ is a subalgebra of $H$. Finally, notice that if $h\leq a\leq k$
for $a\in A$ and $h, k\in H$ then $h\in H$ because $H$ is an upset. Therefore, $H \in \SU$.
\end{proof}

The following result follows from Theorem \ref{teoc} and Lemma \ref{lemteoc}.

\begin{cor}
If $A \in \igSRL$ then $\ConA$ is order isomorphic to $\square\mathrm{Fil}(A)$.
The order isomorphism is established via the assignments $\theta \mapsto 1/\theta$ and
$H\mapsto \theta_H$.
\end{cor}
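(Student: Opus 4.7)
The plan is to derive the corollary by composing the order isomorphism from Theorem \ref{teoc} with the identification of posets given by Lemma \ref{lemteoc}. First I would recall that for $A \in \igSRL$ the constant $e$ in the signature coincides with the greatest element $1$, so the congruence block $e/\theta$ is literally $1/\theta$; this turns the assignment $\theta \mapsto e/\theta$ of Theorem \ref{teoc} into the assignment $\theta \mapsto 1/\theta$ claimed here.

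Next I would invoke Theorem \ref{teoc} to obtain that $\ConA$ is order isomorphic to $\SUs$ via the mutually inverse maps $\theta \mapsto 1/\theta$ and $H \mapsto \theta_H$, both of which preserve and reflect inclusion. Then I would invoke Lemma \ref{lemteoc}, which asserts the equality of subsets $\SUs = \square\mathrm{Fil}(A)$ inside the powerset of $A$. Since both of these are ordered by set-theoretic inclusion, the identity function is trivially an order isomorphism between the poset $(\SUs,\subseteq)$ and the poset $(\square\mathrm{Fil}(A),\subseteq)$.

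Finally I would compose the two isomorphisms. Because the passage from $\SUs$ to $\square\mathrm{Fil}(A)$ is the identity on underlying sets, the composed maps are still $\theta \mapsto 1/\theta$ and $H \mapsto \theta_H$, and they remain mutually inverse order isomorphisms. There is no real obstacle to overcome: everything has been set up in Theorem \ref{teoc} and Lemma \ref{lemteoc}, and the only content of this corollary is the bookkeeping observation that in the integral setting $e = 1$ and that $\SUs$ and $\square\mathrm{Fil}(A)$ denote the same family of subsets of $A$.
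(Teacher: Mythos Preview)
Your proposal is correct and follows exactly the approach indicated in the paper: the corollary is obtained by combining Theorem \ref{teoc} with Lemma \ref{lemteoc}, together with the observation that in the integral case $e = 1$. There is nothing to add; your write-up merely spells out in detail what the paper states in a single sentence.
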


In what follows we will apply Theorem \ref{teoc} for the particular cases of $\CRL$ and $\SRL$
respectively.

\begin{lem} \label{teoc1}
Let $A\in \CRL$. Then $\SUs = \SU$.
\end{lem}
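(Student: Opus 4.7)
The inclusion $\SUs \subseteq \SU$ is immediate from the definitions for any srl-monoid, so the content of the lemma is the reverse inclusion under the hypothesis $A \in \CRL$. I would therefore fix $H \in \SU$, take $a \in A$ and $h \in H$ satisfying $a\cdot h \leq e \leq h\ra a$, and show $a \in H$ by sandwiching $a$ between two elements of $H$ and invoking convexity.

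The key observation is that in a commutative residuated lattice the full residuation law $x\cdot y \leq z \iff x \leq y\ra z$ holds (not merely the weak version from Proposition \ref{c1}). Applied to our hypotheses, $e \leq h\ra a$ gives $h = e\cdot h \leq a$, while $a\cdot h \leq e$ gives $a \leq h\ra e$. Thus $h \leq a \leq h\ra e$. Since $H$ is a subalgebra of $A$ and contains both $h$ and $e$, it contains $h\ra e$; convexity of $H$ then forces $a \in H$, which is exactly the strong convexity condition.

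I expect no real obstacle here: the whole point is that in $\CRL$ we have an actual upper bound $h\ra e \in H$ for $a$, whereas in a general srl-monoid only the one-sided implication of Proposition \ref{c1} is available and one would be stuck with $\square(a) \leq h\ra e$ rather than $a \leq h\ra e$. This is precisely the reason $\SU$ and $\SUs$ can differ outside $\CRL$, as illustrated by the srl-monoid in Example \ref{ex1}.
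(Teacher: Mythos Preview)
Your argument is correct and coincides with the paper's own proof: from $a\cdot h \leq e \leq h\ra a$ one obtains $h \leq a \leq h\ra e$ via the residuation law of $\CRL$, and then convexity together with $h, h\ra e \in H$ gives $a \in H$. The paper phrases the key step as ``$a\cdot h \leq e \leq h\ra a$, i.e., $h\leq a \leq h\ra e$,'' so you have simply unpacked this equivalence explicitly.
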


\begin{proof}
Let $A\in \CRL$. Let $H\in \SU, a\in A$ and $h\in H$
such that $a\cdot h \leq e \leq h\ra a$, i.e., $h\leq a \leq h\ra e$. It follows
from the convexity of $H$ that $a\in H$, so $H\in \SUs$. Thus, $\SU \subseteq \SUs$.
Since $\SUs \subseteq \SU$ then $\SUs = \SU$.
\end{proof}

It follows from Lemma \ref{teoc1} that Theorem \ref{teoc} generalizes \cite[Theorem 2.3]{HRT},
which establishes an order isomorphism between the lattice of congruences
and the lattice of convex subalgebras of each commutative residuated lattice.
Also note that if $A$ is an integral commutative residuated lattice then $\mathrm{Fil}(A) = \square\mathrm{Fil}(A)$,
so it follows from Lemma \ref{lemteoc} that there exists an order isomorphism between the lattice of congruences
and the lattice of filters of each integral commutative residuated lattice.
\vspace{3pt}

Let $A\in \SRL$ and $H$ a subset of $A$. We say that $H$ is a \emph{lattice filter} if
$1\in H$, $H$ is an upset and $a\we b \in H$ whenever $a,b\in H$.
We say that $H$ is an \emph{open lattice filter} if it is a lattice filter
such that $\square(a) \in H$ whenever $a\in H$.
\vspace{1pt}

\begin{prop} \label{teoc2}
Let $A\in \SRL$ (seen as an algebra of $\gSRL$). Then $\SUs$ coincides
with the set of open lattice filters of $A$.
\end{prop}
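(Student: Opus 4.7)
The plan is to reduce this to the integral case already handled by Lemma \ref{lemteoc} and then unfold the definitions. Recall from the connection stated in the introduction that if $(A,\we,\vee,\ra,0,1) \in \SRL$, then the associated srl-monoid is $(A,\we,\vee,\we,\ra,1)$; in particular the monoid operation $\cdot$ coincides with $\we$ and the identity element is $e=1$, which is the greatest element of $A$. Hence $A$ actually lies in $\igSRL$, so Lemma \ref{lemteoc} applies and yields $\SUs = \square\mathrm{Fil}(A)$.

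It then remains to observe that, under the identification $\cdot = \we$, the notion of $\square$-filter collapses to that of open lattice filter. First I would note that for $H \subseteq A$, the condition ``$a\cdot b \in H$ whenever $a,b\in H$'' becomes ``$a\we b \in H$ whenever $a,b\in H$'', which together with $1\in H$ and the upset property is precisely the definition of a lattice filter. Adding closure under $\square$ then gives exactly the definition of an open lattice filter. Thus $\square\mathrm{Fil}(A)$ and the set of open lattice filters of $A$ are identical as subsets of $\mathcal{P}(A)$, and combining this with the previous identification gives $\SUs = \{H \subseteq A : H \text{ is an open lattice filter of } A\}$.

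There is no real obstacle here: once one recognises that the hypothesis $A\in \SRL$ places us in the integral setting (because the srl-monoid reduct has $e=1$) and that $\cdot = \we$ makes the monoidal closure condition coincide with the lattice-theoretic one, the statement is a direct consequence of Lemma \ref{lemteoc}. The only mild care needed is to keep track of which operation is being interpreted as $\cdot$ so as not to conflate the original sr-lattice signature $(\we,\vee,\ra,0,1)$ with the srl-monoid signature $(\we,\vee,\cdot,\ra,e)$ in which the lemma is stated.
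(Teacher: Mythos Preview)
Your proposal is correct and follows exactly the paper's approach: the paper's proof consists of the single sentence ``It is a direct consequence of Lemma \ref{lemteoc},'' and your argument spells out precisely why that lemma applies (since $A\in\SRL$ gives an integral srl-monoid with $\cdot=\we$ and $e=1$) and why the resulting $\square$-filters coincide with open lattice filters.
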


\begin{proof}
It is a direct consequence of Lemma \ref{lemteoc}.
\end{proof}

It follows from Proposition \ref{teoc2} that Theorem \ref{teoc} also generalizes \cite[Theorem 2]{EH}
(see also  \cite[Theorem 6.12]{CJ}), which establishes
an order isomorphism between the lattice of open lattice filters and the lattice of
congruences of each subresiduated lattice.

\section{The strongly convex subalgebra generated by a subset of the negative cone and some applications} \label{s3}

In this section we give a description of the strongly convex subalgebra generated by a subset of the negative cone
of any srl-monoid. We apply this result in order to give some properties of the lattice of
strongly convex subalgebras of any srl-monoid.
\vspace{1pt}

Let $\mathbb{N}$ be the set of natural numbers.
Let $A\in \gSRL, a\in A$ and $n\in \mathbb{N}$. We define $a^{0} = e$ and $a^{n+1} = a\cdot a^{n}$.
We also define $\square^{0}(a) = a$, $\square^{1}(a) = \square(a)$ and for $n\geq 1$,
$\square^{n+1}(a) = \square{(a)} \cdot \square^{n}(a)$.
\vspace{1pt}

The following lemma follows from a straightforward computation.

\begin{lem} \label{sg0}
Let $A\in \gSRL$ and $a,a_1,\dots,a_n \in A$. Then
\begin{enumerate}[\normalfont 1)]
\item $\square(\square(a_1)\cdot \square(a_2)\cdot \ldots \cdot \square(a_n)) = \square(a_1)\cdot \square(a_2)\cdot \ldots \cdot \square(a_n)$.
\item $\square(a_1)\cdot \square(a_2)\cdot \ldots \cdot \square(a_n) \leq a \ra (a \cdot \square(a_1)\cdot \square(a_2)\cdot \ldots \cdot \square(a_n))$.
\end{enumerate}
\end{lem}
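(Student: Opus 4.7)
The plan is to prove both items by induction on $n$, with part (1) supplying the key input for part (2).

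For part (1), I would proceed by induction on $n$. The base case $n=1$ reduces to $\square(\square(a_1)) = \square(a_1)$, which is immediate from item 6 of Lemma \ref{l1} (namely $e\ra(x\ra y) = x\ra y$) applied with $x = e$, together with the observation that $\square(a_1) = e\ra a_1$ is of the form $x\ra y$. For the inductive step, set $b = \square(a_1)\cdot \ldots \cdot \square(a_n)$. The inductive hypothesis gives $b = \square(b)$, so
\[
\square(b\cdot \square(a_{n+1})) = \square(\square(b)\cdot \square(a_{n+1})) = \square(b)\cdot \square(a_{n+1}) = b\cdot \square(a_{n+1}),
\]
where the middle equality is exactly identity 5) of Theorem \ref{t1}. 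This closes the induction.

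For part (2), the clean move is to first invoke part (1). Setting $p = \square(a_1)\cdot \ldots \cdot \square(a_n)$, part (1) tells us that $p = \square(p)$. Then identity 6) of Theorem \ref{t1}, namely $\square(y)\leq x\ra (x\cdot \square(y))$, applied with $y$ chosen so that $\square(y) = p$ (for instance $y = p$ itself, since $p = \square(p)$), yields exactly
\[
p \;=\; \square(p) \;\leq\; a\ra (a\cdot \square(p)) \;=\; a\ra (a\cdot p),
\]
which is the desired inequality.

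I do not expect a real obstacle here: both parts follow from the axioms of Theorem \ref{t1} once the induction is set up, and the only bookkeeping concerns associativity and commutativity of $\cdot$ (which are available since $A$ is a commutative l-monoid). The only mildly delicate point is noticing that part (1) is precisely what converts the iterated product $\square(a_1)\cdots \square(a_n)$ into an element of $Q$, i.e., a fixed point of $\square$, so that the single-variable inequality of Theorem \ref{t1} item 6) becomes applicable in part (2).
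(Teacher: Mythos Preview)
Your proof is correct. The paper does not actually give a proof of this lemma, stating only that it ``follows from a straightforward computation''; your induction on $n$, using identity 5) of Theorem \ref{t1} for the inductive step of part (1) and then feeding part (1) into identity 6) of Theorem \ref{t1} for part (2), is exactly the natural way to carry out that computation.
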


Let $A\in \gSRL$ and $H$ a subalgebra of $A$. We define $H^- = \{a\in H:a\leq e\}$.
This set is called the \emph{negative cone} of $H$.

\begin{rem}\label{remon}
Let $A\in \gSRL, a\in A^-$ and $n_1,n_2,m_1,m_2\in \mathbb{N}$.
A direct computation shows that if $n_1\geq n_2$ and $m_1 \geq m_2$
then $a^{m_1}\leq a^{m_2}$ and $\square^{n_1}(a^{m_1}) \leq \square^{n_2}(a^{m_2})$.
\end{rem}

Let $A\in \gSRL$. For any $S\subseteq A$ we will let $\C[S]$ denote the
smallest strongly convex subalgebra containing $S$ and will let $\C[a] = \C[\{a\}]$.
In what follows, we will let  $ \left\langle S\right\rangle$ denote the
submonoid of $(A, \cdot , e)$ generated by $S$.

The following result is a generalization of \cite[Lemma 2.7]{HRT}.

\begin{lem} \label{lemCS}
Let $A\in \gSRL$ and $S\subseteq A^{-}$. Then
\[
\C[S] = \{x\in A: \square^{n}(h) \leq x\;\text{and}\; x\cdot \square^{n}(h) \leq e,\; \text{for some}\;
h\in \left\langle S\right\rangle \;\text{and}\; n\in \mathbb{N}\}.
\]
\end{lem}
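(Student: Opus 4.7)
Denote the right-hand side of the displayed equality by $T$. The plan is to prove the two inclusions $T \subseteq \C[S]$ and $\C[S] \subseteq T$ separately, by the standard recipe for generated-structure descriptions: one direction uses that $\C[S]$ actually is a strongly convex subalgebra containing $S$; the other direction verifies that $T$ has these same three properties. For the easy direction, since $\C[S]$ is a subalgebra containing $S$ it must contain the submonoid $\langle S \rangle$, and since it contains $e$ and is closed under $\ra$, an easy induction gives $\square^n(h) \in \C[S]$ for every $h \in \langle S \rangle$ and $n \in \mathbb{N}$. For any $x \in T$ the defining conditions $\square^n(h) \leq x$ and $x \cdot \square^n(h) \leq e$ are then exactly the hypothesis of strong convexity applied to $\square^n(h) \in \C[S]$, so $x \in \C[S]$.

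For $\C[S] \subseteq T$, I would show $T$ is a strongly convex subalgebra containing $S$. Containment is immediate with witness $(a,0)$ for each $a \in S$, valid since $a \leq e$ forces $a \cdot a \leq e$. The key technical tool, used repeatedly to combine witnesses, is the inequality
\[
\square^{n_1+n_2}(h_1 h_2) \leq \square^{n_1}(h_1) \cdot \square^{n_2}(h_2),
\]
which I would derive by noting that $\langle S \rangle \subseteq A^-$, so $h_1 h_2 \leq h_1$ and $h_1 h_2 \leq h_2$; applying monotonicity of $\square$ gives $\square(h_1 h_2) \leq \square(h_i)$, and then the identity $\square^n(a) = \square(a)^n$ (for $n \geq 1$) finishes the inequality. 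Bumping $n$ from $0$ to $1$ when needed is harmless, since $\square(h) \leq h$ together with monotonicity of $\cdot$ shows that $(h,n+1)$ is a witness whenever $(h,n)$ is. Given $x_1,x_2 \in T$ with witnesses $(h_i,n_i)$, the candidate witness $(h_1 h_2,\; n_1+n_2)$ verifies membership of $x_1 \cdot x_2$, $x_1 \we x_2$ and $x_1 \vee x_2$ routinely, using only commutative l-monoid laws (in particular $(a \vee b)c = ac \vee bc$) and $\square^{n_i}(h_i) \leq e$.

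The main obstacle is closure under $\ra$. Writing $k_i = \square^{n_i}(h_i)$ and assuming $n_i \geq 1$, I expect: first, $k_1 k_2 \cdot x_1 \leq (k_1 x_1) k_2 \leq e \cdot k_2 = k_2 \leq x_2$, so Proposition \ref{c1}.2) combined with Lemma \ref{sg0}.1) (which gives $\square(k_1 k_2) = k_1 k_2$) yields $k_1 k_2 \leq x_1 \ra x_2$; second, $(x_1 \ra x_2) \cdot k_1 k_2 \leq (x_1 \ra x_2) \cdot x_1 \cdot k_2 \leq x_2 \cdot k_2 \leq e$, by $k_1 \leq x_1$ and Proposition \ref{c1}.1). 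The key inequality then lets $\square^{n_1+n_2}(h_1 h_2)$ replace $k_1 k_2$ as the formal witness. Strong convexity of $T$ is easy: if $h' \in T$ has witness $(h,n)$ and $x \cdot h' \leq e \leq h' \ra x$, then $(h,n)$ is a witness for $x$, since $\square^n(h) \leq h' \leq x$ and $x \cdot \square^n(h) \leq x \cdot h' \leq e$; ordinary convexity is handled analogously by combining the witnesses of the two bounding elements via $(h_a h_b, n_a+n_b)$.
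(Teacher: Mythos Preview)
Your proof is correct and follows essentially the same route as the paper's: both show the right-hand side is a strongly convex subalgebra containing $S$ by combining witnesses through the product $h_1 h_2$ and a suitably enlarged exponent, with the arrow case handled via Lemma~\ref{sg0}. The only cosmetic differences are that the paper uses the common witness $(h_a h_b,\max\{n,m,1\})$ (doubled to $2k$ for $\cdot$ and $\ra$) where you use $(h_1 h_2,\,n_1+n_2)$ throughout, and the paper invokes Lemma~\ref{sg0}.2) directly for the lower bound on $a\ra b$ whereas you reach the same conclusion via Proposition~\ref{c1}.2) together with Lemma~\ref{sg0}.1).
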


\begin{proof}
Note that if $a\in A^{-}$, then $\square(a) \in A^{-}$.
Let $S\subseteq A^{-}$. Then, $\left\langle S\right\rangle \subseteq A^{-}$.
Put
\[
K = \{x\in A: \square^{n}(h) \leq x\;\text{and}\; x\cdot \square^{n}(h) \leq e,\; \text{for some}\;
h\in \left\langle S\right\rangle \;\text{and}\; n\in \mathbb{N}\}.
\]
It is clear that $S\subseteq K \subseteq \C[S]$.
It will suffice to show that $K$ is a strongly convex subalgebra of $A$.
Let $a, b\in K$. Thus, there are $h_{a}, h_{b} \in \left\langle S\right\rangle$ and
$n, m\in \mathbb{N}$ such that
$\square^{n}(h_{a}) \leq a, a \cdot\square^{n}(h_{a}) \leq e,
\square^{m}(h_{b})\leq b$ and $b\cdot \square^{m}(h_{b}) \leq e$.
We may replace $h_{a}$ and $h_{b}$ by $h = h_{a}\cdot h_{b}$, and $n,m$ by $k =$ max $\{n,m,1\}$.

On the other hand, $\square^{k}(h)  \leq   a\we b \leq   a\vee b$ and
\begin{align*}
\square^{k}(h) \cdot (a\we b) & \leq   \square^{k}(h) \cdot (a\vee b)\\
                              & =      (\square^{k}(h)\cdot a) \vee (\square^{k}(h)\cdot b)\\
                              & \leq   e,
\end{align*}
so $K$ is closed under meets and joins. Besides,
\[
\square^{2k}(h)  =   \square^{k}(h)\cdot \square^{k}(h) \leq  a\cdot b
\]
and
\[
\square^{2k}(h)\cdot (a\cdot b)  =   (\square^{k}(h)\cdot a)\cdot (\square^{k}(h) \cdot b) \leq e,
\]
so $K$ is also closed under product.

Now we will show that $K$ is closed under the arrow.
First note that
\[
a\cdot \square^{2k}(h)  =     (a\cdot \square^{k}(h))\cdot \square^{k}(h) \leq  \square^{k}(h) \leq  b,
\]
so
\begin{equation} \label{sg1}
a \ra (a\cdot \square^{2k}(h)) \leq a\ra b.
\end{equation}
Besides, it follows from Lemma \ref{sg0} that
\begin{equation} \label{sg2}
\square^{2k}(h) \leq a \ra (a\cdot \square^{2k}(h)).
\end{equation}
Thus, it follows from (\ref{sg1}) and (\ref{sg2}) that
\begin{equation} \label{sg3}
\square^{2k}(h) \leq a\ra b.
\end{equation}
We also have that
\begin{align*}
\square^{2k}(h)\cdot (a\ra b) & =     \square^{k}(h)\cdot \square^{k}(h) \cdot (a\ra b)\\
                              & \leq  \square^{k}(h)\cdot a\cdot (a\ra b)\\
                              & \leq  \square^{k}(h)\cdot b\\
                              & \leq  e,
\end{align*}
so
\begin{equation} \label{sg4}
\square^{2k}(h)\cdot (a\ra b) \leq e.
\end{equation}
Hence, it follows from (\ref{sg3}) and (\ref{sg4}) that $a\ra b\in K$.
Thus, $K$ is a subalgebra of $A$.

Finally we will see that $K$ is a strongly convex subalgebra of $A$.
The set $K$ is convex because if $a\leq x\leq b$ then $\square^{k}(h)\leq a \leq x$ and
$a\cdot \square^{k}(h) \leq x \cdot \square^{k}(h)\leq b \cdot \square^{k}(h) \leq e$,
so $\square^{k}(h)\leq x$ and $x \cdot \square^{k}(h) \leq e$, i.e., $x\in K$. We have proved the convexity of $K$.
Consider $x\in A$ such that $x\cdot a \leq e$ and $a\leq x$ for some $a\in K$. Since $\square^{k}(h)\leq a\leq x$
then $\square^{k}(h)\leq x$. Besides, $x\cdot \square^{k}(h) \leq a\cdot x \leq e$,
so $x\cdot \square^{k}(h) \leq e$. Then $x\in K$. Therefore, $K$ is a strongly convex subalgebra of $A$.
\end{proof}

\begin{cor} \label{coroc}
Let $A\in \gSRL$ and $a\in A^{-}$. Then
\[
\C[a] = \{x\in A: \square^{n}(a^{m}) \leq x\;\text{and}\; x\cdot \square^{n}(a^{m}) \leq e,\;
\text{for some}\: n,m\in \mathbb{N}\}.
\]
Moreover, if $x\in A^{-}$ then $x\in \C[a]$ if and only if there are $n,m$
such that $\square^{n}(a^{m}) \leq x$.
\end{cor}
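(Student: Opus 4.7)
The plan is to derive both parts of the corollary directly from Lemma \ref{lemCS} by specializing to $S = \{a\}$, together with one monotonicity observation about the negative cone.

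First I would identify the submonoid generated by $\{a\}$. Since $(A,\cdot,e)$ is a commutative monoid and the generating set is a singleton, $\langle \{a\}\rangle = \{a^{m}:m\in \mathbb{N}\}$, where $a^{0}=e$. Substituting $S=\{a\}$ into Lemma \ref{lemCS} then immediately gives the displayed description of $\C[a]$: an element $x\in A$ belongs to $\C[a]$ iff there exist $n,m\in \mathbb{N}$ with $\square^{n}(a^{m})\leq x$ and $x\cdot \square^{n}(a^{m})\leq e$. This handles the first assertion with no further work.

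For the ``moreover'' part, the forward direction is trivial: if $x\in \C[a]$ then the existence of such $n,m$ is part of the characterization just obtained, and in particular $\square^{n}(a^{m})\leq x$. For the converse, suppose $x\in A^{-}$ and that $\square^{n}(a^{m})\leq x$ for some $n,m\in \mathbb{N}$. Since $a\in A^{-}$ we have $a\leq e$, and by monotonicity of $\cdot$ (iterated) this yields $a^{m}\leq e$; applying Lemma \ref{l1}(5) then gives $\square^{k}(a^{m})\leq e$ for every $k\geq 0$ (using $\square(e)=e$ when $m=0$, which follows from items (3) and (5) of Lemma \ref{l1}). Combined with $x\leq e$ and monotonicity of $\cdot$, one obtains
\[
x\cdot \square^{n}(a^{m}) \;\leq\; e\cdot \square^{n}(a^{m}) \;=\; \square^{n}(a^{m}) \;\leq\; e,
\]
so both inequalities in the membership condition for $\C[a]$ are verified, giving $x\in \C[a]$.

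There is essentially no obstacle here beyond bookkeeping: the content lives entirely in Lemma \ref{lemCS}, and the only additional input is the elementary fact that on the negative cone the second inequality $x\cdot \square^{n}(a^{m})\leq e$ is automatic once $\square^{n}(a^{m})\leq x$ holds. Accordingly, the proof will be short and can be written as a direct specialization plus this one-line observation.
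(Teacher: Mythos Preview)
Your proposal is correct and matches the paper's intended approach: the corollary is stated immediately after Lemma \ref{lemCS} with no separate proof, so it is meant to follow by exactly the specialization $S=\{a\}$ you describe, together with the observation that for $x\in A^{-}$ the second inequality $x\cdot\square^{n}(a^{m})\leq e$ comes for free. One minor remark: recall that in this paper $\square^{n}(b)$ is defined as $\square(b)^{n}$ for $n\geq 1$ (not as $n$-fold iteration of $\square$), so your justification of $\square^{n}(a^{m})\leq e$ should really read $\square(a^{m})\leq a^{m}\leq e$ and hence $\square(a^{m})^{n}\leq e$; this is cosmetic and does not affect the argument.
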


\begin{rem} \label{remon1}
It follows from Remark \ref{remon} that
the conclusions of Corollary \ref{coroc} also hold by considering $n=m$.
\end{rem}

Let $A\in \gSRL$ and $S\subseteq A\times A$. We write $\theta(S)$ for the smallest
congruence $\theta$ such that $S\subseteq \theta$. If $S=\{(a_1,b_1),\dots, (a_n,b_n)\}$
we write $\theta((a_1,b_1),\dots,(a_n,b_n))$ in place of $\theta(\{(a_1,b_1),\dots, (a_n,b_n)\})$.
If $S = \{(a,b)\}$ we write $\theta(a,b)$ in place of $\theta(\{(a,b)\})$.

\begin{lem}\label{pc1}
Let $A\in \gSRL$ and $a,b\in A$. Then $e/\theta(a,b) = \C[s(a,b)]$.
\end{lem}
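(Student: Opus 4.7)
The plan is to prove the equality by establishing the two inclusions, using the bijective correspondence from Theorem \ref{teoc} as the central tool. The key observation is that everything we need has essentially already been set up: Lemma \ref{c3} tells us exactly when $s(a,b)$ lands in the block $e/\theta$, and Lemma \ref{c4} translates between membership of $s(a,b)$ in a strongly convex subalgebra $H$ and membership of $(a,b)$ in $\theta_H$. So the proof will mostly amount to chasing these equivalences together with the minimality of $\theta(a,b)$ and of $\C[s(a,b)]$.

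For the inclusion $\C[s(a,b)] \subseteq e/\theta(a,b)$, I would first invoke Lemma \ref{c2} to see that $e/\theta(a,b) \in \SUs$. Since $(a,b)\in \theta(a,b)$, Lemma \ref{c3} gives $s(a,b)\in e/\theta(a,b)$. The minimality of $\C[s(a,b)]$ as the smallest strongly convex subalgebra containing $s(a,b)$ then forces $\C[s(a,b)] \subseteq e/\theta(a,b)$.

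For the reverse inclusion, set $H = \C[s(a,b)] \in \SUs$. Since $s(a,b)\in H$, the implication c) $\Rightarrow$ a) of Lemma \ref{c4} yields $(a,b)\in \theta_H$, and by Lemma \ref{c5} we have $\theta_H \in \ConA$. Minimality of $\theta(a,b)$ therefore gives $\theta(a,b) \subseteq \theta_H$. Applying Corollary \ref{corc3} we get $e/\theta(a,b) \subseteq e/\theta_H$. Finally, in the proof of Theorem \ref{teoc} it was shown that $F(G(H)) = H$, i.e., $e/\theta_H = H = \C[s(a,b)]$, which finishes the argument.

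There is no real obstacle here; the result is essentially a direct consequence of the machinery already in place. The only thing one has to be careful about is not to circularly invoke the isomorphism of Theorem \ref{teoc} as if it already identified principal congruences with principally generated strongly convex subalgebras; instead one should use Lemmas \ref{c2}, \ref{c3}, \ref{c4}, \ref{c5} and Corollary \ref{corc3} explicitly, together with the computed fact $e/\theta_H = H$ from the proof of Theorem \ref{teoc}.
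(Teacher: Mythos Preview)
Your proof is correct. The paper's own argument is more compact: using Theorem~\ref{teoc} directly as an order isomorphism (hence meet-preserving), together with Lemma~\ref{c3}, it writes
\[
e/\theta(a,b) \;=\; \bigcap_{(a,b)\in\theta} e/\theta \;=\; \bigcap_{s(a,b)\in e/\theta} e/\theta \;=\; \C[s(a,b)],
\]
where the last equality holds because the $e/\theta$ range over all of $\SUs$. Your two-inclusion argument unpacks the same content via Lemmas~\ref{c2}--\ref{c5}, Corollary~\ref{corc3}, and the identity $e/\theta_H = H$ from the proof of Theorem~\ref{teoc}. The paper's version is shorter and makes the lattice-theoretic reason (preservation of infima under the isomorphism) transparent; your version has the virtue of being self-contained at the level of the individual lemmas and of making explicit exactly where each minimality hypothesis is used. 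Either way the argument goes through without difficulty.
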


\begin{proof}
Let $A\in \gSRL$ and $a,b\in A$. Taking into account Theorem \ref{teoc} we have that
\[
e/\theta(a,b)  =  \bigcap _{(a,b) \in \theta} e/\theta
               =  \bigcap_{s(a,b) \in e/\theta} e/\theta
               =  \C[s(a,b)].
\]
\end{proof}

Let $A\in \gSRL$, $S\subseteq A^{-}$ and $T\subseteq A^{-}$. Notice that it follows from Lemma \ref{lemCS}
that if $S\subseteq T$ then $\C[S] \subseteq \C[T]$.

\begin{thm} \label{distlat}
Let $A \in \gSRL$. Then $\SUs$ is an algebraic and distributive lattice such that for every
$a,b\in A^{-}$ the equality $\C[a\we b] = \C[a] \vee \C[b]$ is satisfied.
Moreover, the compact elements of $\SUs$ are the elements of the form $\C[a]$ for $a\in A^{-}$.
\end{thm}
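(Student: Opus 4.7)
I will establish the four assertions of the theorem in sequence: that $\SUs$ is a complete lattice, that $\C[a\we b]=\C[a]\vee \C[b]$ for $a,b\in A^-$, that $\SUs$ is algebraic with the described compacts, and finally that $\SUs$ is distributive. The identity will do most of the heavy lifting.

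That $\SUs$ is a complete lattice is a routine check: an arbitrary intersection of strongly convex subalgebras is again a strongly convex subalgebra, so $\SUs$ is closed under arbitrary meets, and the joins are given by $\bigvee_{i\in I}H_i=\C[\bigcup_{i\in I}H_i]$. For the identity $\C[a\we b]=\C[a]\vee\C[b]$ with $a,b\in A^-$, the inclusion $\subseteq$ is immediate, since $\C[a]\vee\C[b]$ is a subalgebra containing both $a$ and $b$ and therefore contains $a\we b$. For the reverse inclusion I will show $a,b\in \C[a\we b]$ using Corollary \ref{coroc} with $n=0$, $m=1$: as $a\we b\in A^-$, one has $a\we b\leq a$, and from $a\leq e$ one obtains $a\cdot(a\we b)\leq a\we b\leq e$, giving $a\in\C[a\we b]$; symmetrically $b\in\C[a\we b]$.

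The crucial lemma en route to algebraicity is that $H=\C[H^-]$ for every $H\in\SUs$. The inclusion $\supseteq$ is clear; for $\subseteq$, given $x\in H$ I set $h:=s(x,e)=(x\ra e)\we \square(x)\we e$. Since $H$ is a subalgebra containing $e$ and $x$, all three components lie in $H$, and $h\leq e$ shows $h\in H^-$; moreover $h\leq \square(x)\leq x$ and $x\cdot h\leq x\cdot(x\ra e)\leq e$, so Corollary \ref{coroc} with $n=0$, $m=1$ yields $x\in\C[h]\subseteq\C[H^-]$. Using this, I will prove that each $\C[a]$ with $a\in A^-$ is compact: assuming $\C[a]\subseteq\bigvee_{i\in I}H_i$, the preceding equality implies $\bigvee_{i\in I}H_i=\C[\bigcup_{i\in I}H_i^-]$, a generation from a subset of $A^-$, and Lemma \ref{lemCS} then witnesses $a$ through a single finite product $h_1\cdots h_k$ with $h_j\in H_{i_j}^-$, whence $\C[a]\subseteq\bigvee_{j=1}^k H_{i_j}$. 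Combining these facts, every $H\in\SUs$ is a join of compact elements, so $\SUs$ is algebraic. Moreover, any compact $K$ must be a finite join $\C[a_1]\vee\cdots\vee\C[a_n]$, and iterating the identity $\C[a\we b]=\C[a]\vee\C[b]$ collapses this to $\C[a_1\we\cdots\we a_n]$ with $a_1\we\cdots\we a_n\in A^-$, yielding the stated description of the compacts.

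Distributivity will follow directly from Theorem \ref{teoc}, whose order isomorphism $\SUs\cong\ConA$ reduces the claim to the distributivity of $\ConA$; since $\gSRL$ is a variety whose signature contains the lattice operations, the term $m(x,y,z):=(x\we y)\vee(y\we z)\vee(x\we z)$ is a majority term, so $\gSRL$ is congruence-distributive and $\ConA$ is distributive. I expect the main obstacle to be the identification $H=\C[H^-]$: locating the correct generator $s(x,e)$ for a generic $x\in H$ and checking it against Corollary \ref{coroc} bundles together several earlier ingredients, whereas the remaining steps are essentially bookkeeping around Lemma \ref{lemCS} and Theorem \ref{teoc}.
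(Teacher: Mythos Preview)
Your proof is correct, but you take a different route from the paper for algebraicity and the description of compacts. The paper simply invokes the isomorphism $\SUs\cong\ConA$ of Theorem~\ref{teoc} to get that $\SUs$ is algebraic and distributive in one stroke (congruence lattices are always algebraic, and $\gSRL$ is congruence-distributive via the lattice majority term), and then identifies the compact elements by pulling back the finitely generated congruences: using Lemma~\ref{pc1}, $e/\theta((a_1,b_1),\dots,(a_n,b_n))=\C[s(a_1,b_1)]\vee\cdots\vee\C[s(a_n,b_n)]=\C[s(a_1,b_1)\we\cdots\we s(a_n,b_n)]$. You instead work intrinsically in $\SUs$: you prove the auxiliary lemma $H=\C[H^-]$ (not in the paper) via the witness $s(x,e)$, use Lemma~\ref{lemCS} directly to show each $\C[a]$ is compact, and only then appeal to Theorem~\ref{teoc} for distributivity. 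Your approach is more hands-on and avoids Lemma~\ref{pc1} entirely, at the cost of a longer argument; the paper's is shorter but relies on the background universal-algebra fact that $\ConA$ is algebraic and on the translation lemma $e/\theta(a,b)=\C[s(a,b)]$. The treatment of the join identity $\C[a\we b]=\C[a]\vee\C[b]$ is essentially the same in both, though the paper uses convexity directly ($a\we b\leq a\leq e$) rather than Corollary~\ref{coroc}.
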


\begin{proof}
Let $A\in \gSRL$. It follows from Theorem \ref{teoc} that $\SUs$ is an algebraic and distributive lattice.

Let $a,b\in A^{-}$. Notice that $\C[a] \vee \C[b] = \C[\{a,b\}]$. In what follows
we will see that $\C[a\we b] = \C[\{a,b\}]$. Since $a\we b \in  \C[\{a,b\}]$
then $\C[a\we b] \subseteq \C[\{a,b\}]$. Conversely, since $a\we b\leq a\leq e$ and
$a\we b\leq b\leq e$ then $a\in \C[a\we b]$ and $b\in \C[a\we b]$, i.e., $\{a,b\} \subseteq \C[a\we b]$.
Hence, $\C[\{a,b\}] \subseteq \C[a\we b]$. Therefore,
\begin{equation} \label{eqsup}
\C[a\we b] = \C[a] \vee \C[b].
\end{equation}
It is known that the compact elements of $\ConA$ are the finitely generated members
$\theta((a_1,b_1),\dots,(a_n,b_n))$ of $\ConA$, so the compact elements of $\SUs$ are that
of the form $e/\theta((a_1,b_1),\dots,(a_n,b_n))$. But
\[
\theta((a_1,b_1),\dots,(a_n,b_n)) = \theta(a_1,b_1)\vee \cdots \vee \theta(a_n,b_n),
\]
so it follows from  Theorem \ref{teoc},
Lemma \ref{pc1} and (\ref{eqsup}) that
\[
e/\theta((a_1,b_1),\dots,(a_n,b_n)) = \C[s(a_1,b_1)\we \dots \we s(a_n,b_n)].
\]
Therefore, the
compact elements of $\SUs$ are that of the form $\C[a]$ for $a\in A^{-}$.
\end{proof}

It is natural to ask the following question: for every
$A\in \gSRL$ and $a,b \in A^-$, does it hold the equality $\C[a\vee b] = \C[a] \cap \C[b]$?
The answer is negative. Indeed, let $A$ be the bounded lattice of four elements, where $0$ is the bottom, $1$ is the top
and $a,b$ are incomparable elements. Let $Q = \{0,1\}$.
We have that $(A,Q)$ is a subresiduated lattice, where the binary operation $\ra$ is determined by

\[%
\begin{tabular}
[c]{c|cccc}%
$\ra$ & $0$        &  $a$    &  $b$       & $1$ \\\hline
$0$     & $1$        &  $1$    &  $1$       & $1$ \\
$a$     & $0$        &  $1$    &  $0$       & $1$ \\
$b$     & $0$        &  $0$    &  $1$       & $1$ \\
$1$     & $0$        &  $0$    &  $0$       & $1$ %
\end{tabular}
\hskip15pt
\]

Thus, this subresiduated lattice can be seen as an algebra of $\gSRL$. Moreover, for $c\in \{0,a,b,1\}$
we have that $\C[c]$ is the open lattice filter generated by $c$, i.e., the least open lattice
filters (with respect to the inclusion) which contains to the element $c$.
It is immediate that $\C[1] = \{1\}$ and $\C[a] = \C[b] = \{0,a,b,1\}$,
so $\C[a\vee b] \neq \C[a] \cap \C[b]$.
\vspace{3pt}

We finalize this section with a characterization of the principal congruences of the
algebras of $\gSRL$.

\begin{thm} \label{thmpc}
Let $A\in \gSRL$ and $a,b \in A$. Then $(x,y)\in \theta(a,b)$ if and only if there exist natural numbers
$n$ and $m$ such that $\square^{m}(s(a,b)^n)\leq s(x,y)$.
\end{thm}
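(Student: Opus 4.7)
The plan is to chain together three results already established in the paper: Lemma \ref{c3}, Lemma \ref{pc1}, and Corollary \ref{coroc} (with Remark \ref{remon1}). First I would observe that by Lemma \ref{c3} applied to $\theta(a,b)$, the membership $(x,y) \in \theta(a,b)$ is equivalent to $s(x,y) \in e/\theta(a,b)$. Then Lemma \ref{pc1} immediately rewrites the right-hand side as $s(x,y) \in \C[s(a,b)]$.

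The next key step is to note that $s(a,b) = (a\ra b) \we (b\ra a) \we e \leq e$, so $s(a,b) \in A^{-}$, which puts us squarely in the setting where Corollary \ref{coroc} applies. Moreover $s(x,y) \leq e$ as well, so $s(x,y) \in A^{-}$, and the ``moreover'' clause of Corollary \ref{coroc} gives us a particularly clean criterion: $s(x,y) \in \C[s(a,b)]$ iff there exist $n,m \in \mathbb{N}$ with $\square^{n}(s(a,b)^{m}) \leq s(x,y)$. (The side condition $s(x,y) \cdot \square^{n}(s(a,b)^{m}) \leq e$ is automatic since $s(x,y) \leq e$ and the monotonicity of $\cdot$ together with $\square^{n}(s(a,b)^{m}) \leq e$ yield the bound.)

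Renaming the indices $n$ and $m$ to match the statement of the theorem concludes the argument. Since the statement's exponents are written as $\square^{m}(s(a,b)^{n})$ rather than $\square^{n}(s(a,b)^{m})$, this is purely a cosmetic relabeling and no extra work is needed.

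In summary, the proof is essentially a three-line composition of prior results and contains no substantive new argument; the real work was already done in Section \ref{s3}, namely in the explicit description of $\C[S]$ provided by Lemma \ref{lemCS} and its corollary. The only observation that requires a moment of attention is that both $s(a,b)$ and $s(x,y)$ lie in the negative cone, which is what licenses the application of the simplified ``moreover'' form of Corollary \ref{coroc}.
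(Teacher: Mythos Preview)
Your proposal is correct and follows exactly the approach of the paper, which simply cites Lemma \ref{c3}, Corollary \ref{coroc} and Lemma \ref{pc1}. You have accurately spelled out how these three results chain together, including the observation that $s(a,b)$ and $s(x,y)$ lie in $A^{-}$, which is precisely what makes the ``moreover'' clause of Corollary \ref{coroc} applicable.
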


\begin{proof}
It follows from Lemma \ref{c3}, Corollary \ref{coroc} and Lemma \ref{pc1}.
\end{proof}

\begin{rem}
It follows from Theorem \ref{thmpc} and Remark \ref{remon1}
that if $A\in \gSRL$ and $a,b \in A$,
$(x,y)\in \theta(a,b)$ if and only if there exists $n\in \mathbb{N}$
such that $\square^{n}(s(a,b)^n)\leq s(x,y)$.
\end{rem}

\section{The variety generated by the totally ordered algebras of $\gSRL$} \label{s4}

We write $\gSRLc$ for the subvariety of $\gSRL$ generated by its totally
ordered members.
The aim of this section is to give alternative equational basis for the
variety $\gSRLc$.
\vspace{1pt}

Consider the following identities:
\begin{enumerate}
\item[$\Co$] $e\leq (x\ra y)\vee (y\ra x)$,
\item[$\Ct$] $e\we (x\vee y) = (e\we x) \vee (e\we y)$,
\item[$\Eo$] $(x\we y)\ra z = (x\ra z) \vee (y\ra z)$,
\item[$\Et$] $z\ra (x\vee y) = (z\ra x) \vee (z\ra y)$.
\end{enumerate}

\begin{lem} \label{laf1}
Let $A\in \gSRL$.
\begin{enumerate}[\normalfont a)]
\item If $A$ satisfies $\Eo$ or $\Et$ then $A$ satisfies $\Co$.
\item If $A$ satisfies $\Co$ then  for every $a,b \in A^{-}$ and $m\in \mathbb{N}$,
$(a\vee b)^{2^m} = a^{2^m} \vee b^{2^m}$.
\item If $A$ satisfies $\Et$ then for every $a,b \in A^{-}$ and $n,m\in \mathbb{N}$,
$\square^{2^n}((a\vee b)^{2^m}) = \square^{2^n}(a^{2^m}) \vee \square^{2^n}(b^{2^m})$.
\end{enumerate}
\end{lem}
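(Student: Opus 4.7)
I prove a), b), c) in that order, since b) uses $\Co$ (the conclusion of a)) and c) uses both. For a), both cases specialize the assumed distributivity identity so that the join $(x\ra y)\vee(y\ra x)$ is forced above a term already $\geq e$. Under $\Eo$, take $z = x\we y$: the identity becomes $(x\we y)\ra(x\we y) = (x\ra(x\we y))\vee(y\ra(x\we y))$; the left side is $\geq e$ by Lemma~\ref{l1}(3), while identity~4 of Theorem~\ref{t1} combined with $e\leq x\ra x$ yields $x\ra(x\we y) = (x\ra x)\we(x\ra y)\leq x\ra y$, and symmetrically for the other summand. Under $\Et$, dually take $z = x\vee y$ and use Lemma~\ref{l1}(1) to bound each of $(x\vee y)\ra x$ and $(x\vee y)\ra y$ by $y\ra x$ and $x\ra y$ respectively.

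For b), the heart is the base case $(a\vee b)^2 = a^2\vee b^2$ for $a,b\in A^-$. Distributivity and commutativity of $\cdot$ reduce this to $ab\leq a^2\vee b^2$, which I obtain by multiplying $e\leq(a\ra b)\vee(b\ra a)$ by $ab$ and distributing:
\[
ab \leq ab(a\ra b)\vee ab(b\ra a) \leq b\cdot b\vee a\cdot a = b^2\vee a^2,
\]
using identity~3 of Theorem~\ref{t1}. An induction on $m$ --- squaring both sides and reapplying the base case to $a^{2^m},b^{2^m}\in A^-$ --- extends this to all exponents $2^m$.

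For c), unfolding the recursion shows $\square^{k}(c) = \square(c)^{k}$ for $k\geq 1$, so $\square^{2^n}$ is simply the $2^n$-th power of $\square$. Specializing $\Et$ to $z = e$ yields $\square(x\vee y) = \square(x)\vee\square(y)$, and by part~a) $\Et$ also entails $\Co$, so part~b) is available. Using that $\square(c)\in A^-$ whenever $c\in A^-$ (by Lemma~\ref{l1}(5)), the argument chains three rewritings: part~b) applied to $a,b$ turns $(a\vee b)^{2^m}$ into $a^{2^m}\vee b^{2^m}$; distributivity of $\square$ over $\vee$ then turns $\square(a^{2^m}\vee b^{2^m})$ into $\square(a^{2^m})\vee\square(b^{2^m})$; and part~b) applied to this pair turns its $2^n$-th power into $\square(a^{2^m})^{2^n}\vee\square(b^{2^m})^{2^n}$. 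The only step needing a genuine idea is the base case of b): multiplying $\Co$ by the cross term $ab$ and invoking residuation is what collapses $ab$ below $a^2\vee b^2$; the rest is distributivity, iteration, and routine induction.
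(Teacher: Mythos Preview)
Your proof is correct and follows essentially the same approach as the paper's: for a) you specialize $\Eo$ at $z=x\we y$ (and dually $\Et$ at $z=x\vee y$), for b) you multiply $\Co$ by $ab$ to absorb the cross term and then induct, and for c) you combine b) with the distributivity of $\square$ over $\vee$ given by $\Et$ at $z=e$, using the observation that $\square^{k}(c)=\square(c)^{k}$ for $k\ge 1$. The paper's proof is identical in substance, only slightly less explicit about the intermediate inequalities in a).
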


\begin{proof}
a) First suppose that $A$ satisfies $\Eo$ and let $a,b\in A$.
Then
\begin{align*}
e     & \leq  (a\we b)\ra (a\we b)  \\
      & =      (a\ra (a\we b)) \vee (b\ra (a\we b))\\
      & \leq    (a\ra b) \vee (b\ra a),
\end{align*}
so $e\leq (a\ra b)\vee (b\ra a)$.
Similarly it can be proved that if $A$ satisfies $\Et$ then $A$ satisfies $\Co$.

b) Assume that $A$ satisfies $\Co$ and let $a,b\in A^-$. Then
\[
(a\vee b)^2 = a^2 \vee b^2 \vee (a\cdot b).
\]
Since $e\leq (a\ra b) \vee (b\ra a)$ then
\begin{align*}
a\cdot b     & \leq  [b\cdot a \cdot (a\ra b)] \vee [a\cdot b \cdot (a\ra b)]  \\
             & \leq   b^2\vee a^2,
\end{align*}
so
\begin{equation} \label{eqt}
(a\vee b)^2 = a^2 \vee b^2.
\end{equation}
Now suppose that $(a\vee b)^{2^m} = a^{2^m} \vee b^{2^m}$ for some $m$.
Since $a^{2^m}, b^{2^m} \in A^-$ then it follows from (\ref{eqt}) that
\begin{align*}
(a\vee b)^{2^{m+1}}     & = [(a\vee b)^{2^m}]^2  \\
                        & = [a^{2^m} \vee b^{2^m}]^2   \\
                        & = a^{2^{m+1}} \vee b^{2^{m+1}}.
\end{align*}
c) Finally assume that $A$ satisfies $\Et$ and let $a,b\in A^-$ and $n,m\in \mathbb{N}$.
First we will see that $\square((a\vee b)^{2^m}) = \square(a^{2^m}) \vee \square(b^{2^m})$.
Indeed, it follows from b) and $\Et$ that
\begin{align*}
\square((a\vee b)^{2^m})     & = \square(a^{2^m} \vee b^{2^m})  \\
                             & = \square(a^{2^m}) \vee \square(b^{2^m}).
\end{align*}
Thus,
\begin{equation} \label{eqs}
\square((a\vee b)^{2^m}) = \square(a^{2^m}) \vee \square(b^{2^m}).
\end{equation}
Hence, since  $\square(a^{2^m}),  \square(b^{2^m}) \in A^-$, by b) and (\ref{eqs})
we get
\begin{align*}
\square^{2^n}((a\vee b)^{2^m})     & = [\square(a^{2^m}) \vee \square(b^{2^m})]^{2^n}  \\
                               & = \square^{2^n}(a^{2^m}) \vee \square^{2^n}(b^{2^m}),
\end{align*}
which was our aim.
\end{proof}

\begin{lem} \label{leminf}
Assume that $A\in \gSRL$ satisfies $\Et$.
If $a,b\in A^-$ then $\C[a\vee b] = \C[a] \cap \C[b]$.
\end{lem}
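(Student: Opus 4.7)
The plan is to apply the description of $\C[\,\cdot\,]$ from Corollary \ref{coroc} (in the compact form of Remark \ref{remon1}) together with the distribution-of-$\square$-over-$\vee$ established in Lemma \ref{laf1}(c). Note first that $a\vee b\in A^-$, so all three convex subalgebras in sight are generated by elements of the negative cone.

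For the easy inclusion $\C[a\vee b]\subseteq \C[a]\cap \C[b]$, I would simply observe that $a,e\in \C[a]$ and $a\le a\vee b\le e$, so by convexity $a\vee b\in \C[a]$; hence $\C[a\vee b]\subseteq \C[a]$, and symmetrically $\C[a\vee b]\subseteq \C[b]$.

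For the nontrivial inclusion $\C[a]\cap \C[b]\subseteq \C[a\vee b]$, take $x$ in the intersection. By Corollary \ref{coroc} and Remark \ref{remon1} there exist $n_1,n_2\in \mathbb{N}$ with $\square^{n_1}(a^{n_1})\le x$, $x\cdot \square^{n_1}(a^{n_1})\le e$, $\square^{n_2}(b^{n_2})\le x$ and $x\cdot \square^{n_2}(b^{n_2})\le e$. Using Remark \ref{remon} I can replace $n_1,n_2$ by a common $N=\max\{n_1,n_2\}$, so both $a$ and $b$ satisfy the inequalities with the same exponent. Now choose $k$ large enough that $2^k\ge N$. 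Applying Remark \ref{remon} again, $\square^{2^k}(a^{2^k})\le \square^N(a^N)\le x$ and similarly for $b$, and the corresponding products with $x$ remain below $e$. Then Lemma \ref{laf1}(c) gives
\[
\square^{2^k}((a\vee b)^{2^k}) \;=\; \square^{2^k}(a^{2^k})\vee \square^{2^k}(b^{2^k}) \;\le\; x,
\]
and, using the distributivity of $\cdot$ over $\vee$,
\[
x\cdot \square^{2^k}((a\vee b)^{2^k}) \;=\; (x\cdot \square^{2^k}(a^{2^k}))\vee (x\cdot \square^{2^k}(b^{2^k})) \;\le\; e.
\]
Invoking Corollary \ref{coroc} one more time, this shows $x\in \C[a\vee b]$, completing the proof.

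The only delicate point is bookkeeping with the exponents: Lemma \ref{laf1}(c) is stated only for powers of $2$, so one cannot plug the exponents $n_1,n_2$ coming out of the membership condition directly. I expect this to be the main (mild) obstacle, and it is handled by the monotonicity observation in Remark \ref{remon}, which lets us inflate the exponent to any $2^k\ge N$ without losing either of the two required inequalities.
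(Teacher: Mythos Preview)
Your proof is correct and follows essentially the same route as the paper: both argue the easy inclusion by convexity, and for the hard inclusion both extract exponents from Corollary~\ref{coroc}, inflate them via Remark~\ref{remon} to a common power of $2$, and then apply Lemma~\ref{laf1}(c) together with distributivity of $\cdot$ over $\vee$. The only cosmetic difference is that you invoke Remark~\ref{remon1} to take equal exponents from the outset, whereas the paper keeps separate $n$ and $m$ and takes $\max\{n_1,n_2,1\}$, $\max\{m_1,m_2,1\}$ before passing to $2^n,2^m$.
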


\begin{proof}
Consider $a,b\in A^{-}$.
Since $a\leq a\vee b\leq e$ and $b\leq a\vee b\leq e$ then $a\vee b \in \C[a]\cap \C[b]$,
so $\C[a\vee b] \subseteq \C[a] \cap \C[b]$. Conversely, let $x\in \C[a] \cap \C[b]$. Thus,
it follows from Corollary \ref{coroc} that there exist $n_1,n_2,m_1,m_2\in \mathbb{N}$
such that $\square^{n_1}(a^{m_1}) \leq x$,
$\square^{n_2}(b^{m_2}) \leq x$, $x\cdot \square^{n_1}(a^{m_1}) \leq e$ and
$x\cdot \square^{n_2}(b^{m_2}) \leq e$. Take $n =\; max\;\{n_1,n_2,1\}$ and
$m =\; max\;\{m_1,m_2,1\}$. Hence, $\square^{n}(a^{m}) \leq x$,
$x\cdot \square^{n}(a^{m}) \leq e$, $\square^{n}(b^{m}) \leq x$ and
$x\cdot \square^{n}(b^{m}) \leq e$. Moreover, since $n\leq 2^n$ and $m\leq 2^m$ then $\square^{2^n}(a^{2^m}) \leq x$,
$x\cdot \square^{2^n}(a^{2^m}) \leq e$, $\square^{2^n}(b^{2^m}) \leq x$ and
$x\cdot \square^{2^n}(b^{2^m}) \leq e$.
Then, it follows from item c) of Lemma \ref{laf1} that
\[
\square^{2^n}((a \vee b)^{2^m}) = \square^{2^n}(a^{2^m}) \vee \square^{2^n}(b^{2^m}),
\]
so
$\square^{2^n}((a \vee b)^{2^m}) \leq x$ because $\square^{2^n}(a^{2^m}) \leq x$ and $\square^{2^n}(b^{2^m}) \leq x$.

Besides,
\begin{align*}
x\cdot \square^{2^n}((a\vee b)^{2^m})   & = x\cdot (\square^{2^n}(a^{2^m}) \vee \square^{2^n}(b^{2^m})) \\
                                        & = (x\cdot \square^{2^n}(a^{2^m})) \vee (x\cdot \square^{2^n}(b^{2^m}))\\
                                        & \leq e,
\end{align*}
so $x\cdot \square^{2^n}((a\vee b)^{2^m})\leq e$. Thus, again by Corollary \ref{coroc} we get
$x\in \C[a\vee b]$. Therefore, $\C[a\vee b] = \C[a] \cap \C[b]$.
\end{proof}

The following result follows from Theorem \ref{distlat} and Lemma \ref{leminf}.

\begin{cor} \label{proppre}
Assume that $A\in \gSRL$ satisfies $\Et$.
Then $\SUs$ is an algebraic and distributive lattice whose compact elements are that of the form $\C[a]$
for $a\in A^{-}$. Moreover, the compact elements of $\SUs$ form a sublattice,
with joins and meets given by the following formulas for every $a,b\in A^{-}$:
\[
\C[a\we b] = \C[a] \vee \C[b]\;\;\;\text{and}\;\;\;\C[a\vee b] = \C[a] \cap \C[b].
\]
\end{cor}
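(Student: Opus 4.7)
The plan is to assemble the corollary directly from the two cited results, without needing any new computations.

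First, I would invoke Theorem \ref{distlat}, which applies to any $A \in \gSRL$ (no hypothesis on $\Et$ needed here) and gives immediately three of the claims: that $\SUs$ is an algebraic and distributive lattice; that its compact elements are precisely the principal strongly convex subalgebras $\C[a]$ with $a \in A^{-}$; and that for $a, b \in A^{-}$ the join formula $\C[a \we b] = \C[a] \vee \C[b]$ holds. This already handles everything except the meet formula and the sublattice claim.

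Next I would use Lemma \ref{leminf}, which does require $A$ to satisfy $\Et$, to obtain the meet formula $\C[a \vee b] = \C[a] \cap \C[b]$ for $a, b \in A^{-}$. Combined with the join formula from the previous step, this describes the binary operations on principal strongly convex subalgebras generated by negative cone elements.

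Finally, to conclude that the compact elements form a sublattice of $\SUs$, I would observe that the negative cone $A^{-}$ is closed under $\we$ and $\vee$: if $a, b \leq e$, then $a \we b \leq e$ and $a \vee b \leq e$. Therefore, for any two compact elements $\C[a], \C[b]$ (with $a, b \in A^{-}$), both $\C[a] \vee \C[b] = \C[a \we b]$ and $\C[a] \cap \C[b] = \C[a \vee b]$ are again of the form $\C[c]$ with $c \in A^{-}$, hence compact. I do not anticipate any real obstacle here, since the corollary is essentially a repackaging of Theorem \ref{distlat} and Lemma \ref{leminf}; the only subtlety is remembering to note the closure of $A^{-}$ under $\we$ and $\vee$ so that the sublattice assertion is justified.
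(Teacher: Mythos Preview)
Your proposal is correct and matches the paper's approach exactly: the paper simply states that the corollary follows from Theorem \ref{distlat} and Lemma \ref{leminf}, and your argument spells out precisely how those two results combine. The extra observation that $A^{-}$ is closed under $\we$ and $\vee$ to justify the sublattice claim is a reasonable detail the paper leaves implicit.
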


\begin{rem}
The totally ordered members of $\gSRL$ satisfy $\Et$.
Hence, the members of $\gSRLc$ satisfy $\Et$.
\end{rem}

The structure of the proof of the following theorem is like that done for \cite[Theorem 3.1]{HRT}.
However, by completeness we opt to give a self contained proof of it.

\begin{thm} \label{rit}
The identities $\Ct$ and $\Et$,
together with those defining $\gSRL$, form an equational basis for $\gSRLc$.
\end{thm}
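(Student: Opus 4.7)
The plan is to prove this via a subdirect representation argument, in the spirit of \cite[Theorem 3.1]{HRT}. One direction is routine: every totally ordered srl-monoid satisfies $\Ct$ and $\Et$, since both identities say that a lattice operation distributes over another, which is automatic in a chain (verify by cases on the relative order of the two arguments being joined). Hence every $A\in \gSRLc$ satisfies $\Ct$ and $\Et$. For the converse, fix $A \in \gSRL$ satisfying $\Ct$ and $\Et$; by Birkhoff's theorem it suffices to show that $A$ is a subdirect product of totally ordered members of $\gSRL$.

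Fix $a\neq b$ in $A$. From Lemma \ref{l1}(4)--(5) one checks that $s(a,b)\in A^-$ and $s(a,b)\neq e$. By Theorem \ref{teoc} together with Lemma \ref{c3}, a congruence $\theta$ separates $(a,b)$ and has totally ordered quotient precisely when its associated $H = e/\theta$ avoids $s(a,b)$ and yields a chain quotient. I will apply Zorn's lemma to the poset of $H\in \SUs$ with $s(a,b)\notin H$ (nonempty, since $\{e\}$ is a member; closed under directed unions, by a routine verification) to produce a maximal such $H$. The substance of the argument is then to show that this maximal $H$ gives a totally ordered $A/\theta_H$.

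To establish this, I need $s(x \vee y, x)\in H$ or $s(x \vee y, y)\in H$ for every $x,y\in A$. The key algebraic step is the identity
\[
s(x \vee y, x) \vee s(x \vee y, y) = e.
\]
The derivation uses Lemma \ref{l1}(4) to drop the factors $x\ra (x\vee y)$ and $y\ra (x\vee y)$ (both $\geq e$) from the two $s$-terms, then applies $\Ct$ to the resulting join of meets with $e$, and finally uses $\Et$ together with Lemma \ref{l1}(3) applied to $(x\vee y)\ra (x\vee y)$ to conclude equality with $e$.

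Now suppose for contradiction that neither $s(x\vee y, x)$ nor $s(x\vee y, y)$ lies in $H$. Then both $H\vee \C[s(x\vee y, x)]$ and $H\vee \C[s(x\vee y, y)]$ strictly contain $H$, so by the maximality of $H$ each of them contains $s(a,b)$. Invoking the distributivity of $\SUs$ (Theorem \ref{distlat}) and Lemma \ref{leminf} (whose hypothesis $\Et$ is available), the intersection of these two extensions equals $H\vee (\C[s(x\vee y, x)]\cap \C[s(x\vee y, y)]) = H\vee \C[s(x\vee y, x)\vee s(x\vee y, y)] = H\vee \C[e] = H$, forcing $s(a,b)\in H$, a contradiction. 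The main obstacle is the displayed identity, since this is precisely where $\Ct$ and $\Et$ are used jointly; once it is in hand, the distributive structure of $\SUs$ together with Lemma \ref{leminf} cleanly converts the tautology $e\leq (x\vee y)\ra (x\vee y)$ into the required totality of the quotient.
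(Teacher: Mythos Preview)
Your argument is correct. It differs from the paper's proof in structure, though both rest on the same algebraic core.

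The paper argues by contrapositive at the level of subdirectly irreducibles: assuming $A$ satisfies $\Ct$ and $\Et$ but is not a chain, it picks incomparable elements $a,b$, sets $u=e\wedge(a\ra b)$ and $v=e\wedge(b\ra a)$, and uses $\Ct$ together with $\Co$ (derived from $\Et$ via Lemma~\ref{laf1}(a)) to get $u\vee v=e$. Then Lemma~\ref{leminf} gives $\C[u]\cap\C[v]=\{e\}$, so $\ConA$ has no monolith and $A$ is not subdirectly irreducible. Birkhoff's theorem finishes.

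You instead build the subdirect representation explicitly: for each $a\neq b$ you Zorn up to a maximal $H\in\SUs$ avoiding $s(a,b)$, and then show $A/\theta_H$ is a chain by proving $s(x\vee y,x)\vee s(x\vee y,y)=e$ and using distributivity of $\SUs$ plus Lemma~\ref{leminf} to derive a contradiction from maximality. Your key identity is the same phenomenon as the paper's $u\vee v=e$, just parametrized differently (you use $\Et$ directly on $(x\vee y)\ra(x\vee y)$ rather than passing through $\Co$). What the paper's route buys is brevity---no Zorn, no explicit quotient analysis---while yours makes the separating chain quotients concrete and shows more transparently where each of $\Ct$ and $\Et$ enters.
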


\begin{proof}
Let $\mathcal{V}$ be denote the subvariety of $\gSRL$ determined by identities $\Ct$ and $\Et$.
Since any totally ordered member of $\gSRL$ clearly satisfies these identities, it
follows that $\gSRLc \subseteq \mathcal{V}$.

To prove the converse it will suffice
to prove that every subdirectly irreducible member of $\mathcal{V}$ is totally ordered. We prove
the contrapositive. Suppose that $A$ satisfies identities $\Ct$ and $\Et$
but that it is not totally ordered. Let $a$ and $b$ incomparable elements in $L$, i.e.,
$e\nleq a\ra b$ and $e\nleq b\ra a$. Let $u= e\we (a\ra b)$ and $v = e\we (b\ra a)$. Note that $u\neq e$ and
$v\neq e$. By identity $\Ct$, $u\vee v = e\we [(a\ra b) \vee (b\ra a)]$.
Since $A$ satisfies $\Et$ then it follows from a) of Lemma \ref{laf1} that $A$ satisfies $\Co$.
Thus, by identity $\Co$, $u\vee v = e$.
Hence, it follows from c) of Lemma \ref{laf1} and Lemma \ref{leminf} that
\begin{align*}
\C[u] \cap \C[v]    & =  \C[u\vee v]  \\
                    & =  \C[e]\\
                    & = \{e\}.
\end{align*}
Then $\C[u] \cap \C[v] = \{e\}$ and in consequence $\ConA$ cannot have a monolith. Then, $A$ is
not subdirectly irreducible. Therefore, $\mathcal{V} \subseteq \gSRLc$.
\end{proof}

The following result is similar to \cite[Theorem 3.4]{HRT}.

\begin{cor} \label{corgc}
Coupled with the identities defining $\gSRL$, both $\{\Eo,\Ct\}$ and
$\{\Co,\Ct\}$ form alternative equational basis for $\gSRLc$.
\end{cor}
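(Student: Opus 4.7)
The plan is to reduce Corollary \ref{corgc} to Theorem \ref{rit} by establishing a single key implication: in $\gSRL$, the pair of identities $\{\Co, \Ct\}$ already implies $\Et$. Once this is in hand, the corollary follows easily. Indeed, any totally ordered srl-monoid satisfies all four of $\Co, \Ct, \Eo, \Et$ (for $\Co$ use Lemma \ref{l1}(4); for $\Eo$, antitonicity of $\ra$ in its first argument collapses each side to $x\ra z$ or $y\ra z$), giving the easy inclusion $\gSRLc \subseteq \mathcal{V}$ in both cases. For the converse, $\Eo$ implies $\Co$ by Lemma \ref{laf1}(a), so both proposed bases imply $\{\Co, \Ct\}$; by the key implication they also imply $\Et$, and Theorem \ref{rit} then places the algebra in $\gSRLc$.

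The main obstacle, and the only nontrivial step, is the derivation of $\Et$ from $\Co$ and $\Ct$. My plan for this derivation is as follows. Fix $x, y, z \in A$ and set $p = e \we (x \ra y)$ and $q = e \we (y \ra x)$; both belong to $Q$ because $Q$ is a subalgebra containing $e$ and the image of $\ra$. By $\Ct$,
\[
p \vee q \;=\; e \we \bigl((x \ra y) \vee (y \ra x)\bigr),
\]
which by $\Co$ equals $e$. This decomposition of $e$ as a join of two elements of $Q$ that will respectively ``absorb'' $x$ and $y$ into the meet is the heart of the argument.

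From $p \leq x \ra y$ (whence $p \cdot x \leq y$ by Proposition \ref{c1}(1)) and $p \leq e$ (whence $p \cdot y \leq y$), the distributive law yields $p \cdot (x \vee y) \leq y$, and symmetrically $q \cdot (x \vee y) \leq x$. Setting $r := z \ra (x \vee y) \in Q$ and multiplying $z \cdot r \leq x \vee y$ by $p$, one obtains $z \cdot (p \cdot r) \leq y$; since $p \cdot r \in Q$, the defining residuation of $\gSRL$ gives $p \cdot r \leq z \ra y$, and symmetrically $q \cdot r \leq z \ra x$. Combining with $p \vee q = e$ and distributivity of $\cdot$ over $\vee$,
\[
r \;=\; (p \vee q) \cdot r \;=\; (p \cdot r) \vee (q \cdot r) \;\leq\; (z \ra x) \vee (z \ra y),
\]
which is the nontrivial direction of $\Et$; the reverse inequality is immediate from monotonicity of $z \ra (-)$. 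This completes the key implication and, by the reduction above, the proof of the corollary.
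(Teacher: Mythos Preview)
Your proof is correct, and the overall strategy---reducing to Theorem~\ref{rit} by showing $\{\Co,\Ct\}\Rightarrow\Et$ within $\gSRL$, and handling $\{\Eo,\Ct\}$ via Lemma~\ref{laf1}(a)---matches the paper exactly. The derivation of $\Et$, however, proceeds differently. The paper fixes $u=(z\ra x)\vee(z\ra y)$ and argues ``inside the arrow'': it shows $(x\vee y)\ra x\leq [z\ra(x\vee y)]\ra u$ and the symmetric inequality, then uses Lemma~\ref{l1}(1) together with $\Ct$ and $\Co$ to conclude $e\leq [z\ra(x\vee y)]\ra u$, hence $z\ra(x\vee y)\leq u$ by Lemma~\ref{l1}(4). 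Your route is a direct multiplicative splitting: from $\Ct$ and $\Co$ you get $e=p\vee q$ with $p=e\we(x\ra y)$, $q=e\we(y\ra x)$ in $Q$, and then exploit distributivity of $\cdot$ over $\vee$ to write $r=(p\cdot r)\vee(q\cdot r)$ and bound each summand separately. Both arguments ultimately rest on the same decomposition of $e$, but yours avoids nested implications and stays at the level of the monoid operation, which makes it slightly more transparent; the paper's version, on the other hand, illustrates how the residuation in $Q$ can carry the full weight of the argument.
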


\begin{proof}
Let $\mathcal{V}$ be the subvariety of $\gSRL$ which satisfies the equations $\Eo,\Ct$
and $\mathcal{W}$ the subvariety of $\gSRL$ which satisfy the equations $\Co,\Ct$.
Since every chain in $\gSRL$ satisfies $\Eo,\Ct$, it follows from Theorem \ref{rit}
that $\gSRLc \subseteq \mathcal{V}$. The inclusion $\mathcal{V} \subseteq \mathcal{W}$
follows from Lemma \ref{laf1}.

Finally we will see that $\mathcal{W} \subseteq \gSRLc$. Let $A\in \mathcal{W}$
and $a,b,c \in A$. Define $u = (a\ra b) \vee (a\ra c)$. It is immediate that
$u\leq a\ra (b\vee c)$. Our aim is to show the other inequality, which is equivalent to
showing that
\[
e \leq [ a\ra (b\vee c)]\ra u.
\]
In order to see it, first notice that
$(b\vee c) \ra b \leq [(a\ra (b\vee c)] \ra u$ if and only if
\[
[(a\ra (b\vee c)] \cdot [(b\vee c) \ra b] \leq u.
\]
But
\begin{align*}
[(a\ra (b\vee c)] \cdot [(b\vee c) \ra b]    & \leq  a\ra b  \\
                                             & \leq (a\ra b) \vee (a\ra c)\\
                                             & = u,
\end{align*}
so $[(a\ra (b\vee c)] \cdot [(b\vee c) \ra b] \leq u$. Hence,
\[
(b\vee c) \ra b \leq [(a\ra (b\vee c)] \ra u.
\]
Similarly, we have that
$[(a\ra (b\vee c)] \cdot [(b\vee c) \ra c] \leq u$,
so
\[
(b\vee c) \ra c \leq [(a\ra (b\vee c)] \ra u.
\]

Thus, taking into account $\Co$ and $\Ct$ we obtain that

\begin{align*}
[(a\ra (b\vee c)] \ra u      & \geq  [(b\vee c) \ra b] \vee [(b\vee c) \ra c] \\
                             & = [(b\ra b) \we (c\ra b)] \vee [(c\ra c) \we (b\ra c)]\\
                             & \geq [(e \we (c\ra b)] \vee [e \we (b\ra c)]\\
                             & = e\we  [(c\ra b) \vee (b\ra c)]\\
                             & = e,
\end{align*}
so $e\leq [(a\ra (b\vee c)] \ra u$, i.e., $a\ra (b\vee c)\leq u$.
\end{proof}

Let $A$ be a totally ordered member of $\gSRL$. A straightforward computation
shows that for every $a,b,c\in A$, $(a\vee b) \we c = (a\we c) \vee (b\we c)$
and $a\cdot (b\we c) = (a\cdot b) \we (a\cdot c)$. Thus, these two equations
are also satisfied in $\gSRLc$. The fact that the underlying lattice of an algebra
of $\gSRLc$ is distributive can be also proved by using Corollary \ref{corgc} and making the
proof done in \cite[Theorem 3.5]{HRT}.
\vspace{1pt}

Finally, note that Corollary \ref{corgc} in the framework of commutative residuated lattices
is \cite[Theorem 3.4]{HRT}. We also have that Corollary \ref{corgc} implies that the subvariety
of $\SRL$ generated by its totally ordered members is characterized by the equation
$(a\ra b)\vee (b\ra a) = 1$ or, equivalently, by the equations $\Eo$ or
$\Et$ (these properties can be also obtained from results of \cite{Cb}).

\section{Conclusions}

In this paper we have introduced and studied subresiduated lattice ordered commutative monoids
(or srl-monoids for short) and integral srl-monoids.
The definition of srl-monoid was motivated by the definitions of
subresiduated lattice (or sr-lattice for short) \cite{EH}
and commutative residuated lattice \cite{HRT} respectively.
\vspace{1pt}

The following elemental properties establish the connection between sr-lattices, commutative
residuated lattices and srl-monoids:
\begin{itemize}
\item If $(A,\we,\vee,\ra,0,1)$ is a sr-lattice then $(A,\we,\vee,\we,\ra,1)$ is a srl-monoid
(in this sense we say that every sr-lattice is an srl-monoid). Moreover,
$(A,\we,\vee, \ra,0,1)$ is a sr-lattice if and only if $(A,\we,\vee,\we,\ra,1)$ is an integral srl-monoid
and this algebra has a least element.
\item If $(A,\we,\vee,\cdot,\ra,e)$ is a commutative residuated lattice then the algebra $(A,\we,\vee,\cdot,\ra,e)$ is
a srl-monoid (it can be proved by defining $Q = A$). Moreover,
$(A,\we,\vee,\cdot,\ra,e)$ is a commutative residuated lattice if and only if
$(A,\we,\vee,\cdot,\ra,e)$ is a srl-monoid and $e\ra a = a$ for every $a\in A$.
\end{itemize}

The aim of the present paper was to generalize results on
sr-lattices and commutative residuated lattices respectively
in the framework of srl-monoids. In particular, we have showed that the class of
srl-monoids is a variety. We have also studied the lattice of congruences
of any srl-monoid and as an application it was proved that
coupled with the identities defining srl-monoids, both $\{\Eo,\Ct\}$ and
$\{\Co,\Ct\}$ form alternative equational basis for the the variety of srl-monoids generated
by its totally ordered members, where the identities $\Co, \Ct, \Eo$ and $\Et$
are defined by
\begin{enumerate}
\item[$\Co$] $e\leq (x\ra y)\vee (y\ra x)$,
\item[$\Ct$] $e\we (x\vee y) = (e\we x) \vee (e\we y)$,
\item[$\Eo$] $(x\we y)\ra z = (x\ra z) \vee (y\ra z)$,
\item[$\Et$] $z\ra (x\vee y) = (z\ra x) \vee (z\ra y)$.
\end{enumerate}
\vspace{1pt}

In a future paper we will study the class whose members are the
$\{\we,\cdot,\ra,1\}$-subreducts of integral srl-monoids. We will
also study the features of the logics semantically defined by the class previously
mentioned and the variety of integral srl-monoids.

\subsection*{Acknowledgments}

The authors thank Jos\'e Luis Castiglioni for several
conversations concerning the matter of this paper.
This work was supported by Consejo Nacional de Investigaciones Cient\'ificas y T\'ecnicas
(PIP 11220170100195CO and PIP 11220200100912CO, CONICET-Argentina), Universidad Nacional del Sur (PGI24/LZ18),
Universidad Nacional de La Plata (11X/921) and Agencia Nacional de Promoción Científica y
Tecnológica (PICT2019-2019-00882, ANPCyT-Argentina).

{\small }

-----------------------------------------------------------------------------------------
\\
Juan Manuel Cornejo,\\
Departamento de Matem\'atica, \\
Universidad Nacional del Sur, \\
and CONICET.\\
Av. Leandro N. Alem Nº1253 - 2º Piso,\\
Bah\'ia Blanca (8000),\\
Argentina.\\
jmcornejo@uns.edu.ar

-----------------------------------------------------------------------------------------
\\
Corresponding author:\\
Hern\'an Javier San Mart\'in,\\
Departamento de Matem\'atica, \\
Facultad de Ciencias Exactas (UNLP), \\
and CONICET.\\
Casilla de correos 172,\\
La Plata (1900), Argentina.\\
hsanmartin@mate.unlp.edu.ar

-----------------------------------------------------------------------------------------
\\
Valeria Anah\'i S\'igal,\\
Departamento de Matem\'atica, \\
Facultad de Ciencias Exactas (UNLP), \\
and CONICET.\\
Casilla de correos 172,\\
La Plata (1900), Argentina.\\
vsigal@mate.unlp.edu.ar

\end{document}